\newtheorem{lemma}{Lemma}
\newtheorem{corollary}{Corollary}
\newtheorem{theorem}{Theorem}
\newcommand{\N}{\mathbb N}
\newcommand{\Z}{\mathbb Z}
\newcommand{\K}{\textbf{\textit{K}}}
\title{A hypothetical way to compute an upper bound for the heights
of solutions of a Diophantine equation with a finite number of solutions}
\author{
\IEEEauthorblockN{Apoloniusz Tyszka}
\IEEEauthorblockA{
University of Agriculture\\
Faculty of Production and Power Engineering\\
Balicka 116B, 30-149 Krak\'ow, Poland\\
Email: rttyszka@cyf-kr.edu.pl}}
\begin{document}
\maketitle
\begin{abstract}
Let
\begin{displaymath}
f(n)=\left\{
\begin{array}{ccl}
1 & {\rm if} & n=1 \\
2^{\textstyle 2^{n-2}} & {\rm if} & n \in \{2,3,4,5\} \\
\left(2+2^{\textstyle 2^{n-4}}\right)^{\textstyle 2^{n-4}} & {\rm if} & n \in \{6,7,8,\ldots\}
\end{array}
\right.
\end{displaymath}
\vskip 0.01truecm
\noindent
We conjecture that if a system
\[
T \subseteq \{x_i+1=x_k,~x_i \cdot x_j=x_k:~i,j,k \in \{1,\ldots,n\}\}
\]
has only finitely many solutions in positive integers \mbox{$x_1,\ldots,x_n$},
then each such solution \mbox{$(x_1,\ldots,x_n)$} satisfies \mbox{$x_1,\ldots,x_n \leqslant f(n)$}.
We prove that the function $f$ cannot be decreased and the conjecture implies that
there is an algorithm which takes as input a Diophantine equation, returns an integer,
and this integer is greater than the heights of integer (\mbox{non-negative} integer,
positive integer, rational) solutions, if the solution set is finite.
We show that if the conjecture is true, then this can be partially confirmed by the
execution of a \mbox{brute-force} algorithm.
\end{abstract}
\begin{IEEEkeywords}
bound for integer solutions, Diophantine equation, finite-fold Diophantine representation, height of a solution,
integer arithmetic.
\end{IEEEkeywords}
\par
\IEEEoverridecommandlockouts
\IEEEPARstart{I}{n this} article, we present a conjecture on integer arithmetic which
implies a positive answer to all versions of the following open problem:
\vskip 0.2truecm
\noindent
{\bf Problem.} {\em Is there an algorithm which takes as input a Diophantine equation, returns an integer,
and this integer is greater than the heights of integer (\mbox{non-negative} integer, positive integer, rational)
solutions, if the solution set is finite?}
\vskip 0.2truecm
\par
We remind the reader that the height of a rational number $\frac{p}{q}$ is defined by \mbox{${\rm max}(|p|,|q|)$}
provided $\frac{p}{q}$ is written in lowest terms.
\begin{theorem}\label{the1}
Only \mbox{$x_1=1$} solves the equation \mbox{$x_1 \cdot x_1=x_1$} in positive integers.
Only \mbox{$x_1=1$} and \mbox{$x_2=2$} solve the system \mbox{$\{x_1 \cdot x_1=x_1,~x_1+1=x_2\}$}
in positive integers. For each integer \mbox{$n \geqslant 3$}, the following system
\begin{displaymath}
\left\{
\begin{array}{rcl}
x_1 \cdot x_1 &=& x_1 \\
x_1+1 &=& x_2 \\
\forall i \in \{2,\ldots,n-1\} ~x_i \cdot x_i &=& x_{i+1}
\end{array}
\right.
\end{displaymath}
\noindent
has a unique solution in positive integers, namely\\
$\left(1,2,4,16,256,\ldots,2^{\textstyle 2^{n-3}},2^{\textstyle 2^{n-2}}\right)$.
\end{theorem}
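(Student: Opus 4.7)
The plan is to read off each $x_i$ from the equations in order: the first equation pins down $x_1$, the second pins down $x_2$, and then the quadratic recurrence $x_i \cdot x_i = x_{i+1}$ propagates a unique value all the way up to $x_n$. Because each equation uniquely determines its ``new'' variable from the previously determined ones, existence and uniqueness of the solution will both fall out of the same induction.

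More concretely, I would first observe that in positive integers the equation $x_1 \cdot x_1 = x_1$ is equivalent to $x_1(x_1-1)=0$, which forces $x_1=1$; this handles the $n=1$ case. Substituting into $x_1+1=x_2$ then forces $x_2=2$, which handles the $n=2$ case and also provides the base of an induction for $n\geqslant 3$. For the inductive step I would prove by induction on $i\in\{2,\ldots,n\}$ that $x_i=2^{\,2^{i-2}}$: the base $i=2$ is $x_2=2=2^{\,2^{0}}$, and assuming $x_i=2^{\,2^{i-2}}$, the equation $x_i\cdot x_i=x_{i+1}$ forces
\[
x_{i+1}=\left(2^{\,2^{i-2}}\right)^{2}=2^{\,2\cdot 2^{i-2}}=2^{\,2^{i-1}},
\]
completing the induction. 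Setting $i=n-1$ and $i=n$ then gives the last two coordinates $2^{\,2^{n-3}}$ and $2^{\,2^{n-2}}$ listed in the statement.

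I do not expect any genuine obstacle: each step of the induction is a forced deduction rather than a case analysis, and positivity is used only once, at the very first equation, to discard the spurious root $x_1=0$. The only thing that requires a bit of care is bookkeeping of the exponent indices, in particular checking that the formula $x_i=2^{\,2^{i-2}}$ agrees with the explicit tuple $(1,2,4,16,256,\ldots)$ at $i=1,2,3,4,5$ and with the tail $\bigl(\ldots,2^{\,2^{n-3}},2^{\,2^{n-2}}\bigr)$ at $i=n-1,n$.
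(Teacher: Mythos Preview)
Your argument is correct and is exactly the natural one; the paper itself states Theorem~\ref{the1} without proof, treating it as an elementary observation, so there is nothing to compare against. One tiny bookkeeping slip: the formula $x_i=2^{\,2^{i-2}}$ is only asserted (and only valid) for $i\geqslant 2$, so in your final check you should not list $i=1$ among the indices where the formula is to be verified---at $i=1$ you have $x_1=1$ from the first equation, not from the closed form.
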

\begin{theorem}\label{the2}
For each positive integer $n$, the following system
\[
\left\{\begin{array}{rcl}
\forall i \in \{1,\ldots,n\} ~x_i \cdot x_i &=& x_{i+1} \\
x_{n+2}+1 &=& x_1 \\
x_{n+3}+1 &=& x_{n+2} \\
x_{n+3} \cdot x_{n+4} &=& x_{n+1}
\end{array}\right.
\]
is soluble in positive integers and has only finitely many integer solutions.
Each integer solution \mbox{$(x_1,\ldots,x_{n+4})$} satisfies
\mbox{$|x_1|,\ldots,|x_{n+4}| \leqslant \left(2+2^{\textstyle 2^n}\right)^{\textstyle 2^n}$}.
The maximal solution in positive integers is given by
\begin{displaymath}
\left\{\begin{array}{rcl}
\forall i \in \{1,\ldots,n+1\} ~x_i &=& \left(2+2^{\textstyle 2^n}\right)^{\textstyle 2^{i-1}} \\
x_{n+2} &=& 1+2^{\textstyle 2^n} \\
x_{n+3} &=& 2^{\textstyle 2^n} \\
x_{n+4} &=& \left(1+2^{\textstyle 2^n-1}\right)^{\textstyle 2^n}
\end{array}\right.
\end{displaymath}
\end{theorem}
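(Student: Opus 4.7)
The plan is to eliminate variables down to a single free parameter $y := x_{n+3}$, and then reduce the whole statement to a divisibility analysis for $y$. First, the squaring chain $x_i \cdot x_i = x_{i+1}$ for $i=1,\ldots,n$ gives, by an easy induction, $x_i = x_1^{\,2^{i-1}}$ for $1 \leqslant i \leqslant n+1$. The two ``$+1$'' equations give $x_{n+2} = y+1$ and $x_1 = x_{n+2}+1 = y+2$. Substituting into the last equation yields
\[
y \cdot x_{n+4} \;=\; x_{n+1} \;=\; (y+2)^{\,2^n}.
\]
Thus every integer solution is completely determined by $y$, and such $y$ is admissible exactly when $y \mid (y+2)^{2^n}$ in $\Z$.

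The key arithmetic step I would prove is that $y \mid (y+2)^{2^n}$ forces $|y|$ to be a power of $2$ not exceeding $2^{2^n}$. Since $\gcd(y,y+2)$ divides $2$, any odd prime dividing $y$ would fail to divide $(y+2)^{2^n}$; hence $|y|=2^k$ for some $k\geqslant 0$ (the edge cases $y=0$ and $y=-2$ are handled directly, the former being ruled out by the equation and the latter producing a valid solution with $x_1 = 0$). For $k \geqslant 1$ write $y+2 = 2(1+\varepsilon\,2^{k-1})$ with $\varepsilon \in \{-1,+1\}$; the bracketed factor is odd (for $k\geqslant 2$) or equals $\pm 1$ (for $k=1$), so $v_2\bigl((y+2)^{2^n}\bigr)=2^n$ and the divisibility $2^k \mid 2^{2^n}$ gives $k \leqslant 2^n$. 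This produces finitely many admissible $y$, whence finitely many integer solutions.

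From the bound $|y|\leqslant 2^{2^n}$ I get $|x_1| = |y+2| \leqslant 2+2^{2^n}$, and then $|x_i| = |x_1|^{2^{i-1}} \leqslant (2+2^{2^n})^{2^n}$ for $i\leqslant n+1$ because $2^{i-1}\leqslant 2^n$; the bounds for $x_{n+2}$ and $x_{n+3}$ are immediate, while $|x_{n+4}| = |y+2|^{2^n}/|y| \leqslant (2+2^{2^n})^{2^n}$. For the positive-integer part, the equation $x_{n+2}=y+1$ forces $y\geqslant 1$, so $y\in\{1,2,4,\ldots,2^{2^n}\}$, each value giving a genuine positive solution; the coordinatewise maximum is attained at $y=2^{2^n}$, and a short computation using $(2+2^{2^n})/2 = 1+2^{2^n-1}$ recovers the displayed tuple, in particular $x_{n+4} = (1+2^{2^n-1})^{2^n}$.

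The main obstacle I anticipate is the divisibility/case analysis itself: handling negative $y$ carefully (sign of $y+2$, the degenerate $y\in\{-2,-1\}$), confirming that the $2$-adic valuation really is exactly $2^n$ in all subcases, and checking that the stated bound $(2+2^{2^n})^{2^n}$ dominates $|x_{n+4}|$ for every admissible $y$ rather than only at the extremal $y=2^{2^n}$. Once this is settled, the rest is routine substitution.
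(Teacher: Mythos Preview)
Your proposal is correct and follows the same reduction as the paper: eliminate down to the single relation $(x_1-2)\cdot x_{n+4}=x_1^{2^n}$ and then bound $|x_1-2|$ by $2^{2^n}$. The paper shortcuts your gcd/$2$-adic case analysis by invoking the polynomial identity
\[
x_1^{2^n}=2^{2^n}+(x_1-2)\sum_{k=0}^{2^n-1}2^{\,2^n-1-k}\,x_1^{\,k}
\]
(equivalently, $x_1^{2^n}\equiv 2^{2^n}\pmod{x_1-2}$), which yields $(x_1-2)\mid 2^{2^n}$ in one line and dissolves the ``main obstacle'' you anticipate.
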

\begin{proof}
The system equivalently expresses that
$(x_1-2) \cdot x_{n+4}=x_1^{\textstyle 2^n}$.
By this and the polynomial identity
\[
x_1^{\textstyle 2^n}=2^{\textstyle 2^n}+(x_1-2) \cdot \sum_{\textstyle k=0}^{\textstyle 2^n-1} 2^{\textstyle 2^n-1-k} \cdot x_1^k
\]
we get that \mbox{$x_{n+3}=x_1-2$} divides $2^{\textstyle 2^n}$ and
\mbox{$x_{n+4}=\frac{\textstyle x_1^{\textstyle 2^n}}{\textstyle x_1-2}$}. Hence,
\mbox{$x_1 \in \left[2-2^{\textstyle 2^n},2+2^{\textstyle2^n}\right] \cap \Z$},
the system has only finitely many integer solutions, and
$|x_1|,\ldots,|x_{n+4}| \leqslant \left(2+2^{\textstyle 2^n}\right)^{\textstyle 2^n}$.
\end{proof}
\par
In~\cite[p.~719]{Tyszka2}, the author proposed the upper bound $2^{\textstyle 2^{n-1}}$
for positive integer solutions to any system
\[
T \subseteq \{x_i+x_j=x_k,~x_i \cdot x_j=x_k:~i,j,k \in \{1,\ldots,n\}\}
\]
which has only finitely many solutions in positive integers \mbox{$x_1,\ldots,x_n$}.
The bound $2^{\textstyle 2^{n-1}}$ is not correct for any \mbox{$n \geqslant 8$} because
the following system
\[
\left\{\begin{array}{rcl}
\forall i \in \{1,\ldots,k\} ~x_i \cdot x_i &=& x_{i+1} \\
x_{k+2}+x_{k+2} &=& x_{k+3} \\
x_{k+2} \cdot x_{k+2} &=& x_{k+3} \\
x_{k+4}+x_{k+3} &=& x_1 \\
x_{k+4} \cdot x_{k+5} &=& x_{k+1}
\end{array}\right.
\]
provides a counterexample for any \mbox{$k \geqslant 3$}. In \mbox{\cite[p.~96]{Tyszka3}},
the author proposed the upper bound $2^{\textstyle 2^{n-1}}$ for modulus of integer
solutions to any system
\[
T \subseteq \{x_k=1,~x_i+x_j=x_k,~x_i \cdot x_j=x_k:~i,j,k \in \{1,\ldots,n\}\}
\]
which has only finitely many solutions in integers \mbox{$x_1,\ldots,x_n$}.
The bound $2^{\textstyle 2^{n-1}}$ is not correct for any \mbox{$n \geqslant 9$} because
the following system
\[
\left\{\begin{array}{rcl}
\forall i \in \{1,\ldots,k\} ~x_i \cdot x_i &=& x_{i+1} \\
x_{k+2} &=& 1 \\
x_{k+3}+x_{k+2} &=& x_1 \\
x_{k+4}+x_{k+2} &=& x_{k+3} \\
x_{k+4} \cdot x_{k+5} &=& x_{k+1}
\end{array}\right.
\]
provides a counterexample for any \mbox{$k \geqslant 4$}. Let
\begin{displaymath}
f(n)=\left\{
\begin{array}{ccl}
1 & {\rm if} & n=1 \\
2^{\textstyle 2^{n-2}} & {\rm if} & n \in \{2,3,4,5\} \\
\left(2+2^{\textstyle 2^{n-4}}\right)^{\textstyle 2^{n-4}} & {\rm if} & n \in \{6,7,8,\ldots\}
\end{array}
\right.
\end{displaymath}
\noindent
{\bf Conjecture.} {\em If a system
\vskip 0.1truecm
\noindent
\centerline{$T \subseteq \{x_i+1=x_k,~x_i \cdot x_j=x_k:~i,j,k \in \{1,\ldots,n\}\}$}
\vskip 0.1truecm
\noindent
has only finitely many solutions in positive integers \mbox{$x_1,\ldots,x_n$}, then each such solution
\mbox{$(x_1,\ldots,x_n)$} satisfies \mbox{$x_1,\ldots,x_n \leqslant f(n)$}.}
\vskip 0.2truecm
\par
Theorems~\ref{the1} and~\ref{the2} imply that the function $f$ cannot be decreased.
Let \mbox{$\cal{R}${\sl ng}} denote the class of all rings $\K$ that extend~$\Z$, and let
\[
E_n=\{x_k=1,~x_i+x_j=x_k,~x_i \cdot x_j=x_k:~i,j,k \in \{1,\ldots,n\}\}
\]
\mbox{Th. Skolem} proved that any Diophantine equation can be algorithmically transformed
into an equivalent system of Diophantine equations of degree at most~$2$,
see \mbox{\cite[pp.~2--3]{Skolem}} and \mbox{\cite[pp.~3--4]{Matiyasevich1}}.
The following result strengthens Skolem's theorem.
\begin{lemma}\label{lem1} (\cite[p.~720]{Tyszka2})
Let \mbox{$D(x_1,\ldots,x_p) \in {\Z}[x_1,\ldots,x_p]$}.
Assume that \mbox{${\rm deg}(D,x_i) \geqslant 1$} for each \mbox{$i \in \{1,\ldots,p\}$}. We can compute a positive
integer \mbox{$n>p$} and a system \mbox{$T \subseteq E_n$} which satisfies the following two conditions:
\vskip 0.2truecm
\noindent
{\tt Condition 1.} If \mbox{$\K \in {\cal R}{\sl ng} \cup \{\N,~\N \setminus \{0\}\}$}, then
\[
\forall \tilde{x}_1,\ldots,\tilde{x}_p \in \K ~\Bigl(D(\tilde{x}_1,\ldots,\tilde{x}_p)=0 \Longleftrightarrow
\]
\[
\exists \tilde{x}_{p+1},\ldots,\tilde{x}_n \in \K ~(\tilde{x}_1,\ldots,\tilde{x}_p,\tilde{x}_{p+1},\ldots,\tilde{x}_n) ~solves~ T\Bigr)
\]
{\tt Condition 2.} If \mbox{$\K \in {\cal R}{\sl ng} \cup \{\N,~\N \setminus \{0\}\}$}, then
for each \mbox{$\tilde{x}_1,\ldots,\tilde{x}_p \in \K$} with \mbox{$D(\tilde{x}_1,\ldots,\tilde{x}_p)=0$},
there exists a unique tuple \mbox{$(\tilde{x}_{p+1},\ldots,\tilde{x}_n) \in {\K}^{n-p}$} such that the tuple
\mbox{$(\tilde{x}_1,\ldots,\tilde{x}_p,\tilde{x}_{p+1},\ldots,\tilde{x}_n)$} solves $T$.
\vskip 0.2truecm
\noindent
Conditions 1 and 2 imply that for each \mbox{$\K \in {\cal R}{\sl ng} \cup \{\N,~\N \setminus \{0\}\}$},
the equation \mbox{$D(x_1,\ldots,x_p)=0$} and the system $T$ have the same number of solutions in $\K$.
\end{lemma}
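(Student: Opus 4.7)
My plan is to build $T$ incrementally, introducing each new variable through an equation of the form $y=a+b$ or $y=a\cdot b$ with $a,b$ already defined. Such equations uniquely determine $y$, so Condition~2 will be automatic; a single closing equation will then encode the statement $D(x_1,\ldots,x_p)=0$ and give Condition~1.

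First I will adjoin a variable $x_{p+1}$ with the equation $x_{p+1}=1$, making the constant $1$ of $\K$ available inside $T$. Writing $D=D_+-D_-$, where $D_+$ and $D_-$ collect the monomials of $D$ according to the sign of their coefficient (so both have non-negative integer coefficients), I will compute each monomial $x^{\alpha}$ appearing in $D$ in a fresh variable $M_\alpha$ by a chain of equations of the form $x_i\cdot x_j=x_k$; build each positive integer coefficient $c$ via $c_1=x_{p+1}$, $c_{\ell+1}=c_\ell+x_{p+1}$; and then obtain the term variable for $cM_\alpha$ by one more product equation. Summing the term variables of $D_+$ pairwise via $x_i+x_j=x_k$ produces a variable $U_+$ equal to $D_+(\tilde x)$, and likewise a variable $U_-$ equal to $D_-(\tilde x)$. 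Finally I append the equation $U_+\cdot x_{p+1}=U_-$; because $x_{p+1}=1$ in every admissible $\K$, this is equivalent to $U_+=U_-$, i.e.\ $D(\tilde x)=0$.

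Condition~2 holds because every auxiliary variable is forced by $(\tilde x_1,\ldots,\tilde x_p)$ in turn, and Condition~1 holds because the closing equation becomes exactly $D(\tilde x)=0$ when evaluated on those forced values. The construction is plainly algorithmic: the number of equations is bounded by a computable function of the total degree of $D$ and the bit-length of its coefficients, so $n$ and $T$ can be computed from $D$.

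The main obstacle is the pair of degenerate cases in which either $D_+$ or $D_-$ has no terms, since one cannot express ``$U=0$'' through the three allowed equation schemes without risking a mismatch with $\K=\N\setminus\{0\}$. The remedy is to replace the closing equation by $U+x_{p+1}=x_{p+1}$, which forces $U=0$ in every ring and in $\N$ while being unsatisfiable in $\N\setminus\{0\}$. This matches the fact that, under the standing hypothesis $\deg(D,x_i)\geqslant 1$, a polynomial whose nonzero coefficients all share a sign has no positive integer zero, so $D=0$ is itself unsatisfiable in $\N\setminus\{0\}$ in this case. Everything else is routine bookkeeping.
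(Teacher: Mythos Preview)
The paper does not actually prove Lemma~1; it only states it with the citation \cite[p.~720]{Tyszka2} and moves on. Your sketch is the standard Skolem-type construction the paper alludes to when it calls the lemma a strengthening of Skolem's theorem: adjoin a constant variable via $x_{p+1}=1$, build each monomial by a chain of products, build each positive coefficient by a chain of ``$+1$'' additions starting from $x_{p+1}$, assemble $D_+$ and $D_-$ as sums of term variables, and close with an equation equivalent to $D_+=D_-$. Every auxiliary variable is introduced by an equation that determines it uniquely from earlier ones, which gives Condition~2, and the single closing constraint yields Condition~1; the check that all intermediate values stay in $\K$ when $\K\in\{\N,\N\setminus\{0\}\}$ is straightforward since only sums, products and the constant $1$ occur.

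Your treatment of the degenerate case (one of $D_+$, $D_-$ empty) is also fine: the closing equation $U+x_{p+1}=x_{p+1}$ forces $U=0$ in any ring and in $\N$, and is unsatisfiable in $\N\setminus\{0\}$, which is exactly what is needed because a nonzero polynomial with coefficients of a single sign cannot vanish on positive integers. So your argument is correct and is precisely the kind of proof the cited reference would contain.
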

\par
For a positive integer $n$, let $S(n)$ denote the successor of $n$.
\begin{lemma}\label{lem2}
Let $T$ be a finite system of equations of the forms: \mbox{$x=1$}, \mbox{$x+y=z$},
and \mbox{$x \cdot y=z$}. If the equation \mbox{$x=1$} belongs to $T$, then the system
\mbox{$T \cup \{x \cdot x=x\} \setminus \{x=1\}$} has the same solutions in positive
integers.
\end{lemma}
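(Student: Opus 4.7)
The plan is very short: the entire lemma reduces to the observation that, in the positive integers, the equation $x \cdot x = x$ is logically equivalent to $x = 1$. Indeed, if $x$ is a positive integer with $x^2 = x$, then $x(x-1) = 0$ in $\Z$, so either $x = 0$ or $x = 1$; since we restrict to positive integers, only $x = 1$ survives. Conversely $1 \cdot 1 = 1$, so $x = 1$ satisfies $x \cdot x = x$.

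Given this, I would argue the two directions of set equality for the solution sets. Suppose $(\tilde{x}_1,\ldots,\tilde{x}_n)$ is a positive integer solution of $T$. Then the equation $x = 1$ in $T$ forces the designated variable to equal $1$, so it automatically satisfies $x \cdot x = x$; every other equation of $T$ (which is also an equation of $T \cup \{x \cdot x = x\} \setminus \{x = 1\}$) is still satisfied, so the tuple solves the new system. Conversely, if $(\tilde{x}_1,\ldots,\tilde{x}_n)$ solves $T \cup \{x \cdot x = x\} \setminus \{x = 1\}$ in positive integers, then by the observation above the designated variable equals $1$, so $x = 1$ is satisfied and all remaining equations of $T$ are retained, hence the tuple solves $T$.

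There is no real obstacle here; the proof is a one-line equivalence plus a symmetric set-theoretic bookkeeping argument. The only point worth flagging is that the restriction to \emph{positive} integers is essential: over $\N$ (or over $\Z$), $x \cdot x = x$ admits the extra root $x = 0$, so the analogous replacement would fail and the lemma would be false in those settings.
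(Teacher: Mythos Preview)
Your proof is correct; the paper states this lemma without proof, treating it as evident, and your argument is exactly the natural one-line justification (that $x\cdot x=x$ has $x=1$ as its unique positive integer solution). Your remark that the restriction to positive integers is essential is also apt.
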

\begin{lemma}\label{lem3}
Let $T$ be a finite system of equations of the forms: \mbox{$S(x)=y$}, \mbox{$x+y=z$},
and \mbox{$x \cdot y=z$}. If the equation \mbox{$x+y=z$} belongs to $T$ and
the variables $z_1$, $z_2$, $\widetilde{z_1}$, $\widetilde{z_2}$, $\widetilde{v}$,
$u$, $t$, $\widetilde{t}$, $v$ are new, then the following system
\[
T \cup \{z \cdot x=z_1,~z \cdot y=z_2,~S(z_1)=\widetilde{z_1},~S(z_2)=\widetilde{z_2},
~\widetilde{z_1} \cdot \widetilde{z_2}=\widetilde{v},
\]
\[
z \cdot z=u,~x \cdot y=t,~S(t)=\widetilde{t},~u \cdot \widetilde{t}=v,~S(v)=\widetilde{v}\} \setminus \{x+y=z\}
\]
has the same solutions in positive integers and a smaller number of additions.
\end{lemma}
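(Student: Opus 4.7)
The plan is to encode \mbox{$x+y=z$} using only successors and multiplications via the algebraic identity
\[
(zx+1)(zy+1) = z^2xy + z(x+y) + 1,
\]
which differs from \mbox{$z^2(xy+1)+1 = z^2xy + z^2 + 1$} exactly by the term \mbox{$z(x+y)-z^2$}. Hence, over the positive integers where \mbox{$z \neq 0$}, the coincidence of these two expressions is equivalent to \mbox{$x+y=z$}. The ten auxiliary equations listed in the lemma are designed so that the variable $\widetilde{v}$ simultaneously names both sides of this equality, thereby enforcing it.

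I would verify the forward direction first: given a positive-integer solution of $T$ (so in particular \mbox{$x+y=z$}), set \mbox{$z_1:=zx$}, \mbox{$z_2:=zy$}, \mbox{$\widetilde{z_1}:=zx+1$}, \mbox{$\widetilde{z_2}:=zy+1$}, \mbox{$u:=z^2$}, \mbox{$t:=xy$}, \mbox{$\widetilde{t}:=xy+1$}, \mbox{$v:=z^2(xy+1)$}, and \mbox{$\widetilde{v}:=(zx+1)(zy+1)$}. All nine values are positive integers and satisfy the new equations trivially except for \mbox{$S(v)=\widetilde{v}$}; expanding $\widetilde{v}$ and substituting \mbox{$x+y=z$} yields \mbox{$\widetilde{v}=z^2xy+z^2+1=v+1$}, as required.

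For the reverse direction, starting from any positive-integer solution of the enlarged system, I would read off, using the defining equations in order, that \mbox{$\widetilde{z_1}=zx+1$}, \mbox{$\widetilde{z_2}=zy+1$}, \mbox{$u=z^2$}, \mbox{$\widetilde{t}=xy+1$}, \mbox{$v=z^2(xy+1)$}, and \mbox{$\widetilde{v}=(zx+1)(zy+1)$}. The equations \mbox{$\widetilde{z_1}\cdot\widetilde{z_2}=\widetilde{v}$} and \mbox{$S(v)=\widetilde{v}$} then force \mbox{$(zx+1)(zy+1)=z^2(xy+1)+1$}, and cancellation gives \mbox{$z(x+y)=z^2$}; since \mbox{$z \geqslant 1$}, dividing by $z$ yields \mbox{$x+y=z$}, so the projection of the tuple onto the variables of $T$ solves $T$.

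Finally, the counting of additions is immediate: the single equation \mbox{$x+y=z$} is removed and the ten replacement equations contain only successors and multiplications, so the enlarged system has exactly one fewer addition than $T$. I do not foresee a genuine obstacle; the only care needed is that the nine auxiliary variables are genuinely new, as stipulated, so that they impose no spurious constraints on the variables already present in $T$.
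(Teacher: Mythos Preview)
Your proof is correct and follows essentially the same approach as the paper: both rest on Robinson's characterisation \mbox{$x+y=z \Leftrightarrow S(zx)\cdot S(zy)=S\bigl(z^2\cdot S(xy)\bigr)$} over positive integers, established by expanding both sides to \mbox{$z^2xy+z(x+y)+1$} and \mbox{$z^2xy+z^2+1$} and cancelling. The paper simply cites Robinson and records this expansion, whereas you spell out both directions and the addition count explicitly; the content is the same.
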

\begin{proof}
According to \cite[p.~100]{Robinson}, for each positive integers \mbox{$x,y,z$}, \mbox{$x+y=z$}
if and only if
\[
S(z \cdot x) \cdot S(z \cdot y)=S((z \cdot z) \cdot S(x \cdot y))
\]
Indeed, the above equality is equivalent to
\[
\left(z^2 \cdot x \cdot y+1\right)+z \cdot (x+y)=\left(z^2 \cdot x \cdot y+1\right)+z^2
\]
\end{proof}
\par
Lemmas~\ref{lem1}--\ref{lem3} imply the next theorem.
\begin{theorem}\label{the3}
If we assume the Conjecture and a Diophantine equation \mbox{$D(x_1,\ldots,x_p)=0$} has only
finitely many solutions in positive integers, then an upper bound for these solutions
can be computed.
\end{theorem}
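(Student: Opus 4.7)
The plan is to chain Lemmas~\ref{lem1}--\ref{lem3} so as to convert the given equation $D(x_1,\ldots,x_p)=0$ into a system that satisfies the hypothesis of the Conjecture, and then read off the bound from $f$. First I would invoke Lemma~\ref{lem1} with $\K=\N\setminus\{0\}$ to algorithmically produce an integer $n>p$ and a system $T\subseteq E_n$ whose positive integer solutions are in bijection, via Condition~2, with those of $D=0$; in particular $T$ has only finitely many positive integer solutions whenever $D=0$ does, and any upper bound on the variables of $T$ also bounds $x_1,\ldots,x_p$.

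Next, for every equation $x=1$ occurring in $T$ I would apply Lemma~\ref{lem2} and replace it with $x\cdot x=x$, which preserves the set of positive integer solutions. I would then repeatedly apply Lemma~\ref{lem3} to remove each remaining equation of the form $x+y=z$, introducing fresh auxiliary variables whose values are uniquely determined, as products and successors of existing coordinates, by the original positive integer solution. Since every invocation of Lemma~\ref{lem3} strictly decreases the number of addition equations, the procedure terminates after finitely many steps with a system $T'$ in some algorithmically computable number of variables $n'$, built only from equations of the forms $x_i+1=x_k$ and $x_i\cdot x_j=x_k$, and whose positive integer solutions biject with those of $D=0$.

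Finiteness of the solution set of $D=0$ therefore transfers to $T'$, so the Conjecture applies to $T'$ and yields $x_1,\ldots,x_{n'}\leqslant f(n')$ for every positive integer solution of $T'$; restricting to the first $p$ coordinates gives the required upper bound $f(n')$, which is computable from $D$. The substantive point to check along the way is that each rewriting step preserves finiteness of the solution set, and this is precisely where the uniqueness clauses in Condition~2 of Lemma~\ref{lem1} and the ``same solutions in positive integers'' assertions of Lemmas~\ref{lem2}--\ref{lem3} are essential; once those are in hand, the remainder is routine algorithmic bookkeeping.
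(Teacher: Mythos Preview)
Your argument is correct and is exactly the approach the paper intends: the paper's proof of Theorem~\ref{the3} consists solely of the sentence ``Lemmas~\ref{lem1}--\ref{lem3} imply the next theorem,'' and your write-up is a faithful unpacking of that chain. The only small remark is that Lemma~\ref{lem1} assumes $\deg(D,x_i)\geqslant 1$ for every $i$, which you should note is harmless here since a variable absent from $D$ either makes the solution set empty (any bound works) or infinite (the hypothesis fails).
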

\begin{corollary}\label{cor1}
If we assume the Conjecture and a Diophantine equation \mbox{$D(x_1,\ldots,x_p)=0$}
has only finitely many solutions in \mbox{non-negative} integers, then an upper bound for
these solutions can be computed by applying Theorem~\ref{the3} to the equation
\mbox{$D(x_{1}-1,\ldots,x_{p}-1)=0$}.
\end{corollary}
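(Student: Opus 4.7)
The plan is to exhibit a simple bijection between the non-negative integer solutions of $D(x_1,\ldots,x_p)=0$ and the positive integer solutions of the shifted equation $D(x_1-1,\ldots,x_p-1)=0$, and then invoke Theorem~\ref{the3} for the latter. Concretely, I would define $D'(x_1,\ldots,x_p) := D(x_1-1,\ldots,x_p-1) \in \Z[x_1,\ldots,x_p]$ and verify that the map $(x_1,\ldots,x_p) \mapsto (x_1+1,\ldots,x_p+1)$ is a bijection between the non-negative integer zero set of $D$ and the positive integer zero set of $D'$. Finiteness of one set is therefore equivalent to finiteness of the other.

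Next I would check that $D'$ satisfies the hypothesis needed to apply Theorem~\ref{the3}, namely that $\deg(D',x_i) \geqslant 1$ for each $i \in \{1,\ldots,p\}$. Since a translation of the argument does not change the partial degree, $\deg(D',x_i) = \deg(D,x_i)$, so if the original hypothesis on $D$ is in place we retain it; otherwise one drops the variables on which $D$ does not actually depend and treats them separately (they are either free, making the solution set infinite, or unconstrained in a trivial range). Assuming the Conjecture, Theorem~\ref{the3} then produces a computable upper bound $B$ for every coordinate of any positive integer solution of $D'=0$.

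Finally, I would translate back: if $(x_1,\ldots,x_p)$ is a non-negative integer solution of $D=0$, then $(x_1+1,\ldots,x_p+1)$ is a positive integer solution of $D'=0$, so $x_i+1 \leqslant B$ and hence $x_i \leqslant B-1 \leqslant B$ for each $i$. Thus $B$ (or $B-1$) is a computable upper bound for the non-negative integer solutions of $D=0$, as claimed.

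I expect no real obstacle here; the only point that requires a moment of care is the bookkeeping around the hypothesis $\deg(D,x_i)\geqslant 1$ of Lemma~\ref{lem1}, which is needed by Theorem~\ref{the3}, and the purely syntactic observation that expanding $D(x_1-1,\ldots,x_p-1)$ yields a Diophantine polynomial to which Theorem~\ref{the3} applies verbatim. Everything else is the trivial shift bijection between $\N$ and $\N \setminus \{0\}$.
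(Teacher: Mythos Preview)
Your proposal is correct and follows exactly the approach the paper intends: the paper gives no separate proof for this corollary because the method is already spelled out in its statement---apply Theorem~\ref{the3} to $D(x_1-1,\ldots,x_p-1)=0$ via the shift bijection $\N \to \N\setminus\{0\}$. Your handling of the degree hypothesis of Lemma~\ref{lem1} is also appropriate (and in fact, if the solution set is finite and nonempty then $D$ must depend on every $x_i$, so the issue does not really arise).
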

\begin{corollary}\label{cor2}
If we assume the Conjecture and a Diophantine equation \mbox{$D(x_1,\ldots,x_p)=0$} has only finitely many
integer solutions, then an upper bound for their modulus can be computed by applying Theorem~\ref{the3}
to the equation
\[
\prod\limits_{\textstyle (i_1,\ldots,i_p) \in \{1,2\}^p} D((-1)^{i_1} \cdot (x_{1}-1),\ldots,(-1)^{i_p} \cdot (x_{p}-1))=0
\]
\end{corollary}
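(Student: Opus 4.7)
The plan is to exploit the elementary identity that every integer $\tilde{x} \in \Z$ admits a representation $\tilde{x}=(-1)^i(\tilde{y}-1)$ with $i \in \{1,2\}$ and $\tilde{y}$ a positive integer: take $i=2,~\tilde{y}=\tilde{x}+1$ when $\tilde{x} \geq 0$, and take $i=1,~\tilde{y}=1-\tilde{x}$ when $\tilde{x}<0$. With this in hand, the strategy is to apply Theorem~\ref{the3} directly to the product equation displayed in the statement, viewed as an equation in positive-integer unknowns $x_1,\ldots,x_p$, and then translate the resulting bound back to $|\tilde{x}_j|$.

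The first step I would carry out is to check that the product equation has only finitely many solutions in positive integers, so that Theorem~\ref{the3} is applicable. A positive-integer tuple $(\tilde{x}_1,\ldots,\tilde{x}_p)$ solves the product iff some factor vanishes, and the factor indexed by $(i_1,\ldots,i_p)$ vanishes precisely when the tuple $\bigl((-1)^{i_1}(\tilde{x}_1-1),\ldots,(-1)^{i_p}(\tilde{x}_p-1)\bigr)$ is an integer solution of $D=0$. Because there are only $2^p$ sign tuples and, by hypothesis, only finitely many integer solutions of $D=0$, the set of positive-integer solutions of the product is finite. Theorem~\ref{the3} then yields a computable upper bound $M$ for all positive-integer coordinates of those solutions.

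Finally, I would translate $M$ into a bound on $|\tilde{x}_j|$. For any integer solution $(\tilde{x}_1,\ldots,\tilde{x}_p)$ of $D=0$, the opening observation supplies a sign tuple $(i_1,\ldots,i_p) \in \{1,2\}^p$ and positive integers $(\tilde{y}_1,\ldots,\tilde{y}_p)$ with $\tilde{x}_j=(-1)^{i_j}(\tilde{y}_j-1)$; by construction this $(\tilde{y}_1,\ldots,\tilde{y}_p)$ zeroes the corresponding factor of the product and hence is a positive-integer solution of the product equation. Therefore $|\tilde{x}_j|=\tilde{y}_j-1 \leq M-1$, which gives the desired computable bound on the modulus of every integer solution of $D=0$. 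I do not anticipate a serious obstacle: the only point requiring real care is the finiteness transfer in the first step, and it reduces to the trivial observation that a finite union of $2^p$ finite sets is finite; the rest is routine sign bookkeeping.
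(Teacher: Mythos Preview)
Your proposal is correct and follows exactly the approach the paper intends: the paper states the corollary with its proof implicit in the statement (``by applying Theorem~\ref{the3} to the equation\ldots''), and you have simply spelled out the routine verification that the product equation has only finitely many positive-integer solutions and that the resulting bound transfers back to $|\tilde{x}_j|$. There is nothing to add.
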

\vskip 0.2truecm
\par
\begin{lemma}\label{lem4} (\cite[p.~720]{Tyszka2})
If there is a computable upper bound for the modulus of integer solutions to a Diophantine
equation with a finite number of integer solutions, then there is a computable upper bound
for the heights of rational solutions to a Diophantine equation with a finite number of
rational solutions.
\end{lemma}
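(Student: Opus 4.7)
The plan is to convert the rational Diophantine equation $D(x_1,\ldots,x_p)=0$, assumed to have only finitely many rational solutions, into an integer Diophantine equation $\widetilde{D}=0$ with only finitely many integer solutions, so that a computable modulus bound for the latter yields a computable height bound for the former. I would introduce new integer unknowns $y_i,z_i$ with the intended meaning $x_i=y_i/z_i$, in lowest terms with $z_i\geqslant 1$. Clearing denominators produces the integer polynomial
\[
\widetilde{D}_0(y_1,z_1,\ldots,y_p,z_p)=z_1^{d_1}\cdots z_p^{d_p}\,D\!\left(\tfrac{y_1}{z_1},\ldots,\tfrac{y_p}{z_p}\right),
\]
where $d_i=\deg_{x_i}(D)$. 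I would then enforce the canonical-form conditions $z_i\geqslant 1$ and $\gcd(y_i,z_i)=1$ by adding Diophantine auxiliary equations, so that $(y_1,z_1,\ldots,y_p,z_p)$ becomes a bijective encoding of the rational solutions of~$D$.

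Positivity $z_i\geqslant 1$ I would encode by Lagrange's four-square theorem as $z_i-1=s_i^2+t_i^2+u_i^2+v_i^2$. Encoding $\gcd(y_i,z_i)=1$ by the naive Bezout equation $\alpha_i y_i+\beta_i z_i=1$ is unsatisfactory, since it admits a one-parameter family of $(\alpha_i,\beta_i)$ and thereby ruins finiteness of the integer solution set. Instead I would pin down the modular inverse uniquely by requiring $\alpha_i\in\{0,1,\ldots,z_i-1\}$ with $\alpha_i y_i\equiv 1\pmod{z_i}$, i.e.\ the single equation $\alpha_i y_i-1=\beta_i z_i$ together with $\alpha_i\geqslant 0$ and $z_i-1-\alpha_i\geqslant 0$, each in turn expressed via a Lagrange decomposition. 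All of these polynomial equations can be packaged into one Diophantine equation $\widetilde{D}=0$ by summing their squares over~$\mathbb{Z}$, or more uniformly by invoking Lemma~\ref{lem1}.

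The verification that $\widetilde{D}=0$ has only finitely many integer solutions whenever $D=0$ has only finitely many rational solutions is then routine: each rational solution determines $(y_i,z_i)$ uniquely by normalization, $(\alpha_i,\beta_i)$ uniquely by the modular-inverse condition, and only finitely many choices of Lagrange witnesses (each bounded in modulus by $\sqrt{z_i-1}$ or $\sqrt{\alpha_i}$). Applying the hypothesized computable modulus bound $B$ to~$\widetilde{D}$ then yields $|y_i|,z_i\leqslant B$ for every rational solution $(y_1/z_1,\ldots,y_p/z_p)$ in lowest terms, which is exactly the desired height bound.

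The main obstacle I anticipate is the coprimality encoding: passing to Bezout coefficients naively leaves infinitely many integer solutions, so a normalization (such as selecting the unique modular inverse in $\{0,\ldots,z_i-1\}$) is essential, and one must verify that every auxiliary variable introduced along the way---including all Lagrange witnesses---is confined to a bounded finite set once $(y_i,z_i)$ are fixed. Once this bookkeeping is done, the conclusion is a direct appeal to the hypothesis.
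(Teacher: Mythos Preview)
The paper does not supply its own proof of Lemma~\ref{lem4}; it simply cites the result from \cite[p.~720]{Tyszka2}. So there is no in-paper argument to compare against. Your reduction is correct and is the standard one: write each rational unknown in lowest terms $x_i=y_i/z_i$ with $z_i\geqslant 1$, clear denominators, and adjoin Diophantine side conditions that pin down the normalisation uniquely so that the resulting integer equation inherits finiteness. Your key observation---that a naive B\'ezout pair $(\alpha_i,\beta_i)$ would leave a free parameter, and that selecting the unique modular inverse $\alpha_i\in\{0,\ldots,z_i-1\}$ via two further Lagrange decompositions cures this---is exactly the point that makes the argument go through. The only edge case worth making explicit is $z_i=1$ (in particular $y_i=0$): then the admissible range for $\alpha_i$ is $\{0\}$ and the congruence $\alpha_i y_i\equiv 1\pmod{1}$ is vacuous with $\beta_i=-1$, so the encoding still behaves correctly. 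A common variant, possibly the one in the cited reference, uses a single common denominator $z$ together with $\gcd(y_1,\ldots,y_p,z)=1$ and $z\geqslant 1$; this trades $p$ coprimality constraints for one and leads to the same conclusion. Your passing mention of Lemma~\ref{lem1} is unnecessary here---summing squares already packages the system into a single equation---but it does no harm.
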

\begin{theorem}\label{the4}
The Conjecture implies that there is a computable upper bound for the heights of rational
solutions to a Diophantine equation with a finite number of rational solutions.
\end{theorem}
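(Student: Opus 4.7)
The plan is to compose the reduction in Corollary~\ref{cor2} with Lemma~\ref{lem4}. Theorem~\ref{the3} already yields, under the Conjecture, a computable upper bound for positive integer solutions of any Diophantine equation with finitely many positive integer solutions. First I would invoke Corollary~\ref{cor2}, which, via the explicit sign-pattern product applied to $D(x_1-1,\ldots,x_p-1)$, upgrades this to a computable upper bound on the modulus of integer solutions of any Diophantine equation with a finite integer solution set.

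Having established, under the Conjecture, a computable integer-modulus bound for arbitrary Diophantine equations with finitely many integer solutions, I would then apply Lemma~\ref{lem4} directly: its hypothesis is precisely what Corollary~\ref{cor2} provides, and its conclusion is the desired computable upper bound on heights of rational solutions of Diophantine equations with finitely many rational solutions.

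The expected difficulty is minimal, since the statement is essentially a pipelining of reductions already in place. The only point to verify is that the three ingredients compose into a single algorithm that takes as input $D$ (with the promise that its rational solution set is finite) and outputs a height bound. This is immediate from their explicitly constructive character: Lemmas~\ref{lem1}--\ref{lem3} provide the effective translation from arbitrary Diophantine equations to systems of the form required by the Conjecture, Theorem~\ref{the3} extracts the positive integer bound, Corollary~\ref{cor2} supplies an effective integer-modulus bound by replacing $D$ with the finite product over sign patterns, and Lemma~\ref{lem4} is stated as an algorithmic implication. There is therefore no genuine obstacle at the level of Theorem~\ref{the4}; the substantive work has already been absorbed into the preceding lemmas, corollaries, and the Conjecture itself.
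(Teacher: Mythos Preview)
Your proposal is correct and follows exactly the paper's own proof, which simply says ``It follows from Corollary~\ref{cor2} and Lemma~\ref{lem4}.'' You have merely expanded the pipeline (Theorem~\ref{the3} $\to$ Corollary~\ref{cor2} $\to$ Lemma~\ref{lem4}) with commentary, but the approach is identical.
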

\begin{proof}
It follows from Corollary~\ref{cor2} and Lemma~\ref{lem4}.
\end{proof}
\par
The Davis-Putnam-Robinson-Matiyasevich theorem states that every recursively
enumerable set \mbox{${\cal M} \subseteq {\N}^n$} has a Diophantine
representation, that is
\[
(a_1,\ldots,a_n) \in {\cal M} \Longleftrightarrow
\]
\[
\exists x_1, \ldots, x_m \in \N ~~W(a_1,\ldots,a_n,x_1,\ldots,x_m)=0 \tag*{\texttt{(R)}}
\]
for some polynomial $W$ with integer coefficients, see \cite{Matiyasevich1}.
The polynomial~$W$ can be computed, if we know the Turing \mbox{machine $M$} such
that, for all \mbox{$(a_1,\ldots,a_n) \in {\N}^n$}, $M$ halts on \mbox{$(a_1,\ldots,a_n)$} if
and only if \mbox{$(a_1,\ldots,a_n) \in {\cal M}$}, \mbox{see \cite{Matiyasevich1}}.
The representation~\texttt{(R)} is said to be \mbox{finite-fold}, if for any
\mbox{$a_1,\ldots,a_n \in \N$} the equation
\mbox{$W(a_1,\ldots,a_n,x_1,\ldots,x_m)=0$} has only finitely many solutions
\mbox{$(x_1,\ldots,x_m) \in {\N}^m$}. \mbox{Yu. Matiyasevich} conjectures that
each recursively enumerable set \mbox{${\cal M} \subseteq {\N}^n$} has a
\mbox{finite-fold} Diophantine representation, see \mbox{\cite[pp.~341--342]{DMR}},
\mbox{\cite[p.~42]{Matiyasevich2}}, and \mbox{\cite[p.~745]{Matiyasevich3}}.
Matiyasevich's conjecture implies a negative answer to the Problem, see \mbox{\cite[p.~42]{Matiyasevich2}}.
\begin{theorem}\label{the5} (cf.~\cite[p.~721]{Tyszka2})
The Conjecture implies that if a set \mbox{${\cal M} \subseteq \N$} has a \mbox{finite-fold}
Diophantine representation, then ${\cal M}$ is computable.
\end{theorem}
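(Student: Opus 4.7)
My plan is to combine the Conjecture's effective upper bound with brute-force search. Let $W(a,x_1,\ldots,x_m)$ be a finite-fold Diophantine representation of ${\cal M}$ as in \texttt{(R)}. Given an input $a \in \N$, the algorithm I would write does the following: (i)~substitute $a$ to obtain the single-parameter Diophantine equation $W(a,x_1,\ldots,x_m)=0$; (ii)~invoke Corollary~\ref{cor1} to compute an effective upper bound $B(a)$ on the non-negative integer solutions; (iii)~evaluate the left-hand side on every tuple in $\{0,1,\ldots,B(a)\}^m$; (iv)~output ``$a \in {\cal M}$'' iff some tuple yields zero. The finite-fold hypothesis guarantees that Corollary~\ref{cor1} is applicable for \emph{every} input $a$, so the procedure always terminates; correctness is immediate from \texttt{(R)}. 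Since membership in ${\cal M}$ is thereby decidable, ${\cal M}$ is computable.

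The heart of the argument is verifying that Corollary~\ref{cor1} genuinely yields an effective $B(a)$ uniformly in $a$. Unpacking it, Corollary~\ref{cor1} reduces the problem to Theorem~\ref{the3} applied to $W(a,x_1-1,\ldots,x_m-1)=0$, a Diophantine equation with finitely many positive-integer solutions. Theorem~\ref{the3} follows a three-step pipeline: Lemma~\ref{lem1} converts this equation into a system $T \subseteq E_n$ with the same solution set in $\N \setminus \{0\}$; Lemma~\ref{lem2} eliminates each equation $x_k=1$ by replacing it with $x_k \cdot x_k = x_k$; and Lemma~\ref{lem3} iteratively removes each addition $x_i+x_j=x_k$ in favour of a handful of multiplications and successor equations $x+1=y$. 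After these substitutions the final system lies inside $\{x_i+1=x_k,~x_i\cdot x_j=x_k:i,j,k\in\{1,\ldots,N\}\}$ for some computable $N$, and still has only finitely many solutions in $\N \setminus \{0\}$; the Conjecture then bounds each coordinate by $f(N)$, which unwinds through the translation $x_i \mapsto x_i-1$ into the desired $B(a)$.

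The main obstacle I anticipate is bookkeeping rather than mathematics: at each step of the pipeline one must check that the reduction preserves finiteness of the solution set, so that the Conjecture is actually applicable at the end. This is exactly what Conditions~1 and~2 of Lemma~\ref{lem1} (which give a bijection between solutions of the equation and solutions of the auxiliary system) and the ``same solutions in positive integers'' clauses of Lemmas~\ref{lem2} and~\ref{lem3} are designed to supply. Once these observations are recorded and the enlarged variable count $N$ is computed from the constructions, no further idea is required and the proof closes.
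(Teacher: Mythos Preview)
Your proposal is correct and follows essentially the same approach as the paper's own proof: invoke Corollary~\ref{cor1} on the specialised equation $W(a,x_1,\ldots,x_m)=0$ to obtain a computable bound $B(a)$, then decide membership by exhaustive search over $\{0,\ldots,B(a)\}^m$. Your additional unpacking of the pipeline through Lemmas~\ref{lem1}--\ref{lem3} and your remark on uniformity in $a$ are not in the paper's proof (which simply cites Corollary~\ref{cor1}), but they are accurate elaborations rather than a different method.
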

\begin{proof}
Let a set \mbox{${\cal M} \subseteq \N$} has a \mbox{finite-fold} Diophantine representation.
It means that there exists a polynomial \mbox{$W(x,x_1,\ldots,x_m)$} with integer coefficients such that
\[
\forall b \in \N~ \Bigl(b \in {\cal M} \Longleftrightarrow \exists x_1, \ldots, x_m \in \N ~~W(b,x_1,\ldots,x_m)=0\Bigr)
\]
and for any \mbox{$b \in \N$} the equation \mbox{$W(b,x_1,\ldots,x_m)=0$} has only finitely many solutions
\mbox{$(x_1,\ldots,x_m) \in {\N}^m$}. By Corollary~\ref{cor1}, there is a computable function \mbox{$g \colon \N \to \N$}
such that for each \mbox{$b,x_1,\ldots,x_m \in \N$} the equality \mbox{$W(b,x_1,\ldots,x_m)=0$} implies
\mbox{${\rm max}(x_1,\ldots,x_m) \leqslant g(b)$}.
Hence, we can decide whether or not a \mbox{non-negative} integer $b$ belongs to ${\cal M}$ by checking
whether or not the equation \mbox{$W(b,x_1,\ldots,x_m)=0$} has an integer solution in the box \mbox{$[0,g(b)]^m$}.
\end{proof}
\par
In this paragraph, we follow \cite{Tyszka1} and
we explain why Matiyasevich's conjecture although widely known is less widely accepted.
Let us say that a set \mbox{${\cal M} \subseteq {\N}^n$} has a bounded Diophantine
representation, if there exists a \mbox{polynomial $W$} with integer coefficients such that
\[
(a_1,\ldots,a_n) \in {\cal M} \Longleftrightarrow \exists x_1,\ldots,x_m \in \left\{0,\ldots,\mathrm{max}\left(a_1,\ldots,a_n\right)\right\}
\]
\[
W\left(a_1,\ldots,a_n,x_1,\ldots,x_m\right)=0
\]
Of course, any bounded Diophantine representation is \mbox{finite-fold}
and any subset of $\N$ with a bounded Diophantine
representation is computable. A simple diagonal argument shows that there exists
a computable subset of $\N$ without any bounded Diophantine
representation, see \cite[p.~360]{DMR}. The authors of \cite{DMR}
suggest a possibility (which contradicts Matiyasevich's conjecture)
that each subset of $\N$ which has a \mbox{finite-fold} Diophantine representation
has also a bounded Diophantine representation, see \cite[p.~360]{DMR}.
\vskip 0.2truecm
\par
For a positive integer $n$, let $\tau(n)$ denote the smallest positive
integer $b$ such that for each system
\[
T \subseteq \{x_i+1=x_k,~x_i \cdot x_j=x_k:~i,j,k \in \{1,\ldots,n\}\}
\]
with a finite number of solutions in positive integers \mbox{$x_1,\ldots,x_n$},
all these solutions belong to \mbox{$[1,b]^n$}. By Theorems~\ref{the1} and~\ref{the2},
\mbox{$f(n) \leqslant \tau(n)$} for any positive integer $n$. The Conjecture
implies that \mbox{$f=\tau$}.
\begin{theorem}\label{the6} (cf. \cite[Theorem 4]{Tyszka1})
If a function \mbox{$h\colon \N \setminus \{0\} \to \N \setminus \{0\}$}
has a \mbox{finite-fold} Diophantine representation,
then there exists a positive integer $m$ such that \mbox{$h(n)<\tau(n)$} for any \mbox{$n>m$}.
\end{theorem}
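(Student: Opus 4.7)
The plan is to translate the finite-fold Diophantine representation of $h$ into a system $T_n$ of the required form, using $\nu(n)+C+1$ variables with $\nu(n)=o(n)$, in which $h(n)+1$ is forced to appear as the value of a designated variable. The defining property of $\tau$ will then give $h(n)+1\leq\tau(\nu(n)+C+1)$, and a variable-count comparison will yield the strict inequality $h(n)<\tau(n)$ eventually.

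Let $W(a,y,x_1,\ldots,x_m)\in\Z[a,y,x_1,\ldots,x_m]$ be a polynomial giving the finite-fold representation of $h$, shifted by $\pm 1$ in the style of Corollary~\ref{cor1} so that we may work with positive integers throughout. For each $n$, the system $T_n$ is built from three blocks. Block~(A) constructs the integer $n$ as the value of a distinguished variable $a$ using $\nu(n)$ variables: writing $n$ in binary and using the squaring chain of Theorem~\ref{the1} to generate the relevant powers of two, combined with Lemma~\ref{lem3}'s replacement of each addition by a constant-size gadget of successor and multiplication equations, one achieves $\nu(n)=O(\log n \cdot \log\log n)$; only $\nu(n)=o(n)$ will matter for the argument. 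Block~(B) applies Lemma~\ref{lem1} to $W$ to get an equivalent system in $E_M$, then Lemma~\ref{lem2} to eliminate equations $x=1$ and Lemma~\ref{lem3} to eliminate additions, producing a subsystem in the target language with $C$ further variables (depending only on $W$) that enforces $W(a,y,x_1,\ldots,x_m)=0$ with $a$ identified with the output of Block~(A). Block~(C) adjoins one new variable $y'$ and the equation $y+1=y'$, so that $y'=h(n)+1$ in any solution.

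The assembled $T_n$ lies in the target language, uses $\nu(n)+C+1$ variables, and has only finitely many solutions in positive integers: Block~(A) admits a unique solution (each equation determines one new variable from previously determined ones), while Blocks~(B) and~(C) inherit finite solubility from the finite-fold property of $W$ via the uniqueness clause of Lemma~\ref{lem1}. Every solution assigns $y'=h(n)+1$, so the definition of $\tau$ yields
\[
h(n)+1 \,\leq\, \tau\!\left(\nu(n)+C+1\right).
\]
The function $\tau$ is nondecreasing: any $k$-variable system with a finite number of solutions extends to a $(k{+}1)$-variable system with the same underlying solutions by adjoining $x_{k+1}\cdot x_{k+1}=x_{k+1}$, which forces $x_{k+1}=1$. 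Since $\nu(n)=o(n)$, we have $\nu(n)+C+1\leq n$ for all $n$ beyond some threshold $m$, and therefore $h(n)+1\leq \tau(n)$, i.e., $h(n)<\tau(n)$, for every $n>m$.

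The main obstacle is the careful execution of Block~(A): producing, for every positive integer $n$, an explicit target-language subsystem whose unique positive-integer solution realizes $n$ in a chosen variable, and counting auxiliary variables tightly enough to certify $\nu(n)=o(n)$. A secondary issue is checking that the concatenation of the three blocks introduces no spurious positive-integer solutions, which follows from the bottom-up determinism of Block~(A) together with the uniqueness clause in Lemma~\ref{lem1}.
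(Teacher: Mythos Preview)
Your proof is correct and follows the same overall strategy as the paper: encode $n$ into a designated variable, append the (translated) finite-fold representation, adjoin a successor to witness $h(n)+1$, and invoke the definition of $\tau$. The difference lies entirely in Block~(A). The paper builds $n$ by the naive chain $t_1\cdot t_1=t_1$, $t_i+1=t_{i+1}$ up to $t_{[n/2]}=[n/2]$, then sets $u=t_2\cdot t_{[n/2]}=2[n/2]$ and corrects parity, using roughly $n/2$ variables; it then pads with dummy equations $u_i\cdot u_i=u_i$ to reach \emph{exactly} $n$ variables, so no monotonicity of $\tau$ is needed and the threshold is the explicit $m=2s+2$. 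Your binary construction is more economical in variable count (indeed $O(\log n)$ already suffices, without the squaring-chain detour), but this buys nothing here since only $\nu(n)+C+1\leq n$ eventually is required; in exchange you must argue that $\tau$ is nondecreasing, which you do correctly via the $x_{k+1}\cdot x_{k+1}=x_{k+1}$ padding. Either route works; the paper's is the more elementary one and yields an explicit threshold.
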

\begin{proof}
There exists a polynomial \mbox{$W(x_1,x_2,x_3,\ldots,x_r)$} with integer coefficients
such that for each positive integers \mbox{$x_1,x_2$},
\[
(x_1,x_2) \in h \Longleftrightarrow
\]
\[
\exists x_3,\ldots,x_r \in \N \setminus \{0\}~~ W(x_1,x_2,x_3-1,\ldots,x_r-1)=0
\]
and for each positive integers \mbox{$x_1,x_2$} at most finitely many tuples
\mbox{$(x_3,\ldots,x_r)$} of positive integers satisfy \mbox{$W(x_1,x_2,x_3-1,\ldots,x_r-1)=0$}.
By \mbox{Lemmas \ref{lem1}--\ref{lem3}}, there is an integer \mbox{$s \geqslant 3$} such that
for any positive integers \mbox{$x_1,x_2$},
\[
(x_1,x_2) \in h \Longleftrightarrow
\]
\begin{equation}
\exists x_3,\ldots,x_s \in \N \setminus \{0\}~~ \Psi(x_1,x_2,x_3,\ldots,x_s)\tag*{\texttt{(E)}}
\end{equation}
where \mbox{$\Psi(x_1,x_2,x_3,\ldots,x_s)$} is a conjunction of formulae of the forms
\mbox{$x_i+1=x_k$} and \mbox{$x_i \cdot x_j=x_k$}, the indices $i,j,k$ belong to
$\{1,\ldots,s\}$, and for each positive integers \mbox{$x_1,x_2$} at most finitely
many tuples \mbox{$(x_3,\ldots,x_s)$} of positive integers satisfy \mbox{$\Psi(x_1,x_2,x_3,\ldots,x_s)$}.
Let $[\cdot]$ denote the integer part function, and let an integer $n$ is greater than \mbox{$m=2s+2$}.
Then,
\[
n \geqslant \left[\frac{n}{2}\right]+\frac{n}{2}>\left[\frac{n}{2}\right]+s+1
\]
and \mbox{$n-\left[\frac{n}{2}\right]-s-2 \geqslant 0$}.
Let $T_n$ denote the following system with $n$ variables:
\[
\left\{
\begin{array}{c}
\textrm{all~equations~occurring~in~}\Psi(x_1,x_2,x_3,\ldots,x_s)\\
\begin{array}{rcl}
\forall i \in \left\{1,\ldots,n-\left[\frac{n}{2}\right]-s-2\right\} ~u_i \cdot u_i &=& u_i\\
t_1 \cdot t_1 &=& t_1\\
\forall i \in \left\{1,\ldots,\left[\frac{n}{2}\right]-1\right\} ~t_i+1 &=& t_{i+1}\\
t_2 \cdot t_{\left[\frac{n}{2}\right]} &=& u\\
u+1 &=&x_1 {\rm ~(if~}n{\rm ~is~odd)}\\
t_1 \cdot u &=& x_1 {\rm ~(if~}n{\rm ~is~even)}\\
x_2+1 &=& y
\end{array}
\end{array}
\right.
\]
By the equivalence~\texttt{(E)}, the system $T_n$ is soluble in positive integers,
\mbox{$2 \cdot \left[\frac{n}{2}\right]=u$}, \mbox{$n=x_1$}, and
\[
h(n)=h(x_1)=x_2<x_2+1=y
\]
Since $T_n$ has at most finitely many solutions in positive integers, \mbox{$y \leqslant \tau(n)$}.
Hence, \mbox{$h(n)<\tau(n)$}.
\end{proof}
\newpage
Below is the excerpt from page 135 of the \mbox{book \cite{Smart}:}
\vskip 0.2truecm
\noindent
{\em Folklore. If a Diophantine equation has only finitely many solutions then those solutions
are small in `height' when compared to the parameters of the equation.
\vskip 0.2truecm
\par
This folklore is, however, only widely believed because of the large amount of experimental
evidence which now exists to support it.}
\vskip 0.2truecm
\par
Below is the excerpt from page 12 of the article \cite{Stoll}:
\vskip 0.2truecm
\noindent
{\em Note that if a Diophantine equation is solvable, then we can prove it, since we will
eventually find a solution by searching through the countably many possibilities
(but we do not know beforehand how far we have to search). So the really hard
problem is to prove that there are no solutions when this is the case. A similar
problem arises when there are finitely many solutions and we want to find them
all. In this situation one expects the solutions to be fairly small. So usually it
is not so hard to find all solutions; what is difficult is to show that there are no
others.}
\vskip 0.2truecm
\par
That is, mathematicians are intuitively persuaded that solutions are small when there are finitely
many of them. It seems that there is a reason which is common to all the equations. Such a
reason might be the Conjecture whose consequences we have already presented.
\vskip 0.2truecm
\par
For a positive integer $b$, let $\Phi(b)$ denote the Conjecture restricted to systems whose
all solutions in positive integers are not greater than $b$. Obviously,
\[
\Phi(1) \Leftarrow \Phi(2) \Leftarrow \Phi(3) \Leftarrow \ldots
\]
and the Conjecture is equivalent to \mbox{$\forall b \in \N \setminus \{0\}~\Phi(b)$}.
For each positive integer $n$, there are only finitely many systems
\[
T \subseteq \{x_i+1=x_k,~x_i \cdot x_j=x_k:~i,j,k \in \{1,\ldots,n\}\}
\]
Hence, for each positive integer $n$ there exists a positive integer $m$ such that
the Conjecture restricted to systems with at most $n$ variables is equivalent to
the sentence $\Phi(m)$. The Conjecture is true for \mbox{$n=1$} and \mbox{$n=2$}.
Therefore, the sentence $\Phi(4)$ is true.
\begin{theorem}\label{the7}
The Conjecture is equivalent to the following conjecture on integer arithmetic:
if positive integers \mbox{$x_1,\ldots,x_n$} satisfy \mbox{${\rm max}(x_1,\ldots,x_n)>f(n)$},
then there exist positive integers \mbox{$y_1,\ldots,y_n$} such that
\[
\Bigl({\rm max}(x_1,\ldots,x_n)<{\rm max}(y_1,\ldots,y_n)\Bigl)~\wedge
\]
\[
\Bigl(\forall i,k \in \{1,\ldots,n\} ~(x_i+1=x_k \Longrightarrow y_i+1=y_k)\Bigr)~\wedge
\]
\[
\Bigl(\forall i,j,k \in \{1,\ldots,n\} ~(x_i \cdot x_j=x_k \Longrightarrow y_i \cdot y_j=y_k)\Bigr)
\]
\end{theorem}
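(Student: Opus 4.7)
The plan is to prove the two implications separately, using in each case the full system of equations satisfied by a given tuple. Throughout, I write $\|(x_1,\ldots,x_n)\|$ for $\max(x_1,\ldots,x_n)$.

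For the direction Conjecture $\Rightarrow$ second statement, I would start from positive integers $x_1,\ldots,x_n$ with $\|(x_1,\ldots,x_n)\|>f(n)$ and define $T$ to be the (finite) system consisting of \emph{all} equations of the forms $x_i+1=x_k$ or $x_i\cdot x_j=x_k$ that are satisfied by $(x_1,\ldots,x_n)$. Then $(x_1,\ldots,x_n)$ solves $T$. If $T$ had only finitely many solutions in positive integers, the Conjecture would force $\|(x_1,\ldots,x_n)\|\leqslant f(n)$, contradicting the hypothesis. Hence $T$ has infinitely many solutions in positive integers. Since for every bound $B$ there are only finitely many $n$-tuples of positive integers bounded by $B$, the set of solutions of $T$ contains some $(y_1,\ldots,y_n)$ with $\|(y_1,\ldots,y_n)\|>\|(x_1,\ldots,x_n)\|$. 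Because every equation of the two allowed forms satisfied by $(x_1,\ldots,x_n)$ lies in $T$ by construction, it is automatically satisfied by $(y_1,\ldots,y_n)$, which yields the required implications.

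For the converse, I would argue by contradiction. Assume the second statement and suppose that the Conjecture fails for some system $T\subseteq\{x_i+1=x_k,~x_i\cdot x_j=x_k:~i,j,k\in\{1,\ldots,n\}\}$ having only finitely many solutions in positive integers, as witnessed by a solution $(x_1,\ldots,x_n)$ with $\|(x_1,\ldots,x_n)\|>f(n)$. Apply the second statement to obtain positive integers $(y_1,\ldots,y_n)$ with $\|(y_1,\ldots,y_n)\|>\|(x_1,\ldots,x_n)\|$ satisfying every equation of the two allowed forms that $(x_1,\ldots,x_n)$ satisfies; in particular, $(y_1,\ldots,y_n)$ satisfies every equation in $T$, so it is another solution of $T$. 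Because $\|(y_1,\ldots,y_n)\|>\|(x_1,\ldots,x_n)\|>f(n)$, the hypothesis of the second statement applies to $(y_1,\ldots,y_n)$ as well, and iterating the construction produces an infinite sequence of solutions of $T$ with strictly increasing maxima, hence pairwise distinct. This contradicts the finiteness of the solution set of $T$.

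The construction is essentially forced once one realizes that the second statement is the natural ``no local maximum'' rephrasing of the Conjecture, so I do not expect a serious obstacle. The only point that requires care is the passage from ``infinitely many solutions'' to ``a solution with strictly larger maximum'' in the forward direction (handled by the pigeonhole observation above) and the verification in the converse direction that the strict inequality on maxima is preserved under iteration, so that each new tuple again satisfies the hypothesis $\|\cdot\|>f(n)$ and the iteration can continue indefinitely.
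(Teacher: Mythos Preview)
Your proof is correct. The paper actually states Theorem~\ref{the7} without any proof (it is followed immediately by the flowchart), presumably because the argument is routine; your write-up supplies exactly the natural argument the author omitted. Both directions proceed in the only reasonable way: in the forward direction you pass to the maximal system $T$ satisfied by $(x_1,\ldots,x_n)$ and use the Conjecture to force infinitely many solutions, hence one of strictly larger height; in the converse you iterate the second statement to manufacture infinitely many solutions of any offending system $T$, contradicting finiteness. There is nothing to compare against, and I see no gap in what you wrote.
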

\par
The execution of the following flowchart never terminates.
\newpage
\begin{figure}[ht!]
\begin{tikzpicture}[very thick,yscale=.85]
\ttfamily
\path (0,17.75) node{Start};
\path (0,16.75) node{$c:=2$};
\path (0,15.25) node{$a:=2$};
\path (-.85,13.75) node{Compute positive integers $x_1,\ldots,x_n$};
\path (-.85,13.25) node{and primes $r_1,\ldots,r_n$ such that};
\path (-.85,12.75) node{$r_1<\ldots<r_n$ and $a=r_1^{\textstyle x_1} \ldots r_n^{\textstyle x_n}$};
\path (0,11.75) node{Is $f(n) < \max(x_1,\ldots,x_n) \leqslant c$?};
\path (0,10.75) node{Print $[x_1,\ldots,x_n]$};
\path (0,9.75) node{Print $c-1$};
\path (0,8.75) node{$b:=2$};
\path (.85,7.25) node{Compute positive integers $y_1,\ldots,y_m$};
\path (.85,6.75) node{and primes $q_1,\ldots,q_m$ such that};
\path (.85,6.25) node{$q_1<\ldots<q_m$ and $b=q_1^{\textstyle y_1} \ldots q_m^{\textstyle y_m}$};
\path (0,5.25) node{Is $m\geqslant n$?};
\path (0,4.25) node{Is $\max(y_1,\ldots,y_n)>c$?};
\path (.05,3.25) node{Is $\forall{i,k \in \{1,\ldots,n\}}\ (x_i+1=x_k \Rightarrow y_i+1=y_k)$?};
\path (.25,2.25) node{Is $\forall{i,j,k \in \{1,\ldots,n\}}\ (x_i \cdot x_j = x_k \Rightarrow y_i \cdot y_j = y_k)$?};
\path (0,1.25) node{Is $x_1 = \ldots = x_n = c \leqslant f(n+1)$?};
\path (0,.25) node{$c:=c+1$};
\path (3.5,13.25) node{$a:=a+1$};
\path (-3.4,6.75) node{$b:=b+1$};

\draw (0,17.75) ellipse (.75cm and 0.25cm);
\draw (-.55,16.5) rectangle (.55,17);
\draw (-.55,15) rectangle (.55,15.5);
\draw (-4,12.4) rectangle (2.3,14);
\draw (-2.35,11.5) rectangle (2.35,12);
\draw (-.55,8.5) rectangle (.55,9);
\draw (-2.3,5.9) rectangle (4,7.5);
\draw (-.85,5) rectangle (.85,5.5);
\draw (-1.85,4) rectangle (1.85,4.5);
\draw (-3.5,3) rectangle (3.6,3.5);
\draw (-3.5,2) rectangle (4,2.5);
\draw (-2.45,1) rectangle (2.45,1.5);
\draw (-.75,0) rectangle (.75,.5);
\draw (-4.15,6.5) rectangle (-2.65,7);
\draw (2.75,13) rectangle (4.25,13.5);
\draw (-1.75,10.5) -- (-1.35,11) -- (1.75,11) -- (1.35,10.5) -- (-1.75,10.5);
\draw (-1.3,9.5) -- (-.9,10) -- (1.3,10) -- (.9,9.5) -- (-1.3,9.5);
\draw (-.75,.25) -- (-4.25,.25) -- (-4.25,16) -- (-4,16);
\draw (-4,7) -- (-4,8) -- (-3.8,8)  (4.1,13.5) -- (4.1,14.5) -- (3.9,14.5);
\draw[->] (0,17.5) -- (0,17);
\draw[->] (0,16.5) -- (0,15.5);
\draw[->] (0,15) -- (0,14);
\draw[->] (0,12.4) -- (0,12);
\draw[->] (0,10.5) -- (0,10);
\draw[->] (0,9.5) -- (0,9);
\draw[->] (0,8.5) -- (0,7.5);
\draw[->] (0,5.9) -- (0,5.5);
\draw[->] (-4,2.25) -- (-4,6.5);
\draw[->] (4.1,1.25) -- (4.1,13);
\draw[->] (-4,8) -- (0,8);
\draw[->] (-4.25,16) -- (0,16);
\draw[->] (4.1,14.5) -- (0,14.5);
\draw[->] (0,11.5) -- (0,11) node[midway, right] {Yes};
\draw[->] (0,5) -- (0,4.5) node[midway, right] {Yes};
\draw[->] (0,4) -- (0,3.5) node[midway, right] {Yes};
\draw[->] (0,3) -- (0,2.5) node[midway, right] {Yes};
\draw[->] (0,2) -- (0,1.5) node[midway, right] {Yes};
\draw[->] (0,1) -- (0,.5) node[midway, right] {Yes};
\draw[->] (2.35,11.75) -- (4.1,11.75);
\draw[->] (-.86,5.25) -- (-4,5.25);
\draw[->] (-1.85,4.25) -- (-4,4.25);
\draw[->] (-3.5,3.25) -- (-4,3.25) node[midway, above] {No};
\draw[->] (-3.5,2.25) -- (-4,2.25) node[midway, above] {No};
\draw[->] (2.45,1.25) -- (4.1,1.25);
\path (2.35,11.75) node[above right] {No};
\path (-.85,5.25) node[above left] {No};
\path (-1.85,4.25) node[above left] {No};

\path (2.45,1.25) node[above right] {No}; 
\end{tikzpicture}
\end{figure}
\begin{theorem}\label{the8}
If the Conjecture is true, then the execution of the flowchart provides an infinite sequence
\mbox{$X_1,c_1,X_2,c_2,X_3,c_3,\ldots$} where \mbox{$\{c_1,c_2,c_3,\ldots\}=\N \setminus \{0\}$},
\mbox{$c_1 \leqslant c_2 \leqslant c_3 \leqslant \ldots$} and each~$X_i$ is a tuple of
positive integers. Each returned number~$c_i$ indicates that the performed computations
confirm the sentence $\Phi(c_i)$.
If the Conjecture is false, then the execution provides a similar finite sequence
$X_1,c_1,\ldots,X_k,c_k$ on the output. In this case,
for the tuple \mbox{$X_k=(x_1,\ldots,x_n)$} an appropriate tuple \mbox{$(y_1,\ldots,y_n)$} does not exist,
\mbox{$\{c_1, \ldots,c_k\}=[1,c_k] \cap \N$}, \mbox{$c_1 \leqslant \ldots \leqslant c_k$},
the sentences \mbox{$\Phi(1),\Phi(2),\Phi(3),\ldots,\Phi(c_k)$} are true, and the sentences
\mbox{$\Phi(c_k+1),\Phi(c_k+2),\Phi(c_k+3),\ldots$} are false.
\end{theorem}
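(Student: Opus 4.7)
The plan is to reformulate $\Phi(b)$ in the style of Theorem~\ref{the7} and then trace the flowchart under each of the two hypotheses. First I would prove, for each positive integer $b$, that $\Phi(b)$ is equivalent to the assertion that for every tuple $(x_1,\ldots,x_n)$ of positive integers with $f(n)<\max(x_1,\ldots,x_n)\leqslant b$ there exist positive integers $y_1,\ldots,y_n$ with $\max(y_1,\ldots,y_n)>b$ for which the two implications of Theorem~\ref{the7} hold. For the forward direction, let $T_x$ denote the system of all equations of the allowed forms that are satisfied by $x$; then $T_x$ either has infinitely many positive integer solutions (one of which must lie outside $[1,b]^n$) or has only finitely many, in which case $\Phi(b)$ together with $\max(x)>f(n)$ forces a solution with some coordinate $>b$. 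The backward direction uses the inclusion $T\subseteq T_x$: the promised $y$ is itself a solution of $T$ with $\max(y)>b$, contradicting any alleged failure of $\Phi(b)$.

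Next I would interpret the flowchart. As $a$ runs through $2,3,4,\ldots$, the exponent tuple in the unique factorization $a=r_1^{x_1}\cdots r_n^{x_n}$ with $r_1<\cdots<r_n$ visits every positive-integer tuple (each one for infinitely many $a$); the analogous remark for the inner variable $b$, together with the fact that only the first $n$ exponents of $b$ are tested, shows that the inner loop eventually considers every candidate $(y_1,\ldots,y_n)$. Since $f$ is strictly increasing on $\N\setminus\{0\}$, for each $c\geqslant 2$ there is a unique $n_c$ with $f(n_c)<c\leqslant f(n_c+1)$, and the termination check \mbox{$x_1=\cdots=x_n=c\leqslant f(n+1)$} is met only by the single tuple $(c,\ldots,c)$ of length $n_c$. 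This termination tuple satisfies no equation from the allowed set (since $c+1\neq c$ and $c^2\neq c$ for $c\geqslant 2$), so its implications are vacuous and it always admits the trivial witness $(c+1,1,\ldots,1)$; in particular it is never stuck. A direct comparison of prime-power factorizations then shows that any other bad tuple $x$ with $\max(x)\leqslant c$ and length at most $n_c$ is realized by a strictly smaller integer than the termination tuple.

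Now I treat the two cases. If the Conjecture holds, Theorem~\ref{the7} supplies a witness with $\max(y)>\max(x)$ for any bad $x$; iterating Theorem~\ref{the7} on successive witnesses produces tuples with arbitrarily large $\max$ while propagating the implications from $x$, so the inner $y$-search terminates for every bad tuple and every $c$. The outer loop then reaches the termination tuple for the current $c$ and advances, and the algorithm produces the claimed infinite sequence with $\{c_i:i\geqslant 1\}=\N\setminus\{0\}$. If the Conjecture fails, then since $\Phi(4)$ is true and $\Phi(1)\Leftarrow\Phi(2)\Leftarrow\cdots$, there is a largest integer $b^*\geqslant 4$ with $\Phi(b^*)$ true; for each $c\in\{2,\ldots,b^*\}$ the algorithm advances as in the true case, whereas at $c=b^*+1$ the bounded equivalence produces some stuck tuple $x^*$, necessarily distinct from the termination tuple and hence realized at a strictly smaller integer. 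The $a$-loop therefore reaches $x^*$ before it can advance, the inner $y$-search runs forever, and $c_k=b^*$ is printed as the final value. The main technical obstacle is the bounded equivalence of step one, combined with the observation that the termination tuple is vacuously witnessed, which together force the hang to occur at exactly $c=b^*+1$.
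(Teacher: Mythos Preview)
Your argument is correct and follows the same skeleton as the paper's proof, but you flesh out several steps the paper leaves implicit. The paper's entire proof is the observation that for fixed $c\geqslant 2$, since $f$ is strictly increasing there is a smallest $n$ with $c\leqslant f(n+1)$, so every tuple $(x_1,\ldots,x_i)$ with $f(i)<\max(x_1,\ldots,x_i)\leqslant c$ has $i\leqslant n$ and is encoded by some $a\in[2,\,p_1^c\cdots p_n^c]$; hence the outer loop visits all tuples needed to test $\Phi(c)$ before reaching the termination condition. Everything else---the bounded analogue of Theorem~\ref{the7} linking $\Phi(c)$ to the existence of witnesses with $\max(y)>c$, the check that the termination tuple always admits a vacuous witness, and the separate treatment of the two cases---is left to the reader. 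You supply exactly these pieces, and your bounded equivalence is the natural formalization of what the paper calls ``confirming $\Phi(c)$.''

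One small inaccuracy that does not affect the conclusion: the termination predicate $x_1=\cdots=x_n=c\leqslant f(n+1)$ is satisfied by $(c,\ldots,c)$ of \emph{every} length $n\geqslant n_c$, not only $n=n_c$. What matters is that the length-$n_c$ instance has the smallest encoding $a=p_1^c\cdots p_{n_c}^c$, so it is reached first, and (by your own bound) after all other bad tuples with $\max\leqslant c$. Likewise, when the Conjecture fails at $c=b^*+1$, the algorithm hangs at the \emph{first} witness-free bad tuple in $a$-order, which may precede your chosen $x^*$; but since $x^*$ already sits strictly before the termination tuple, the hang certainly occurs before $c$ can advance, which is all you need.
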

\newpage
\begin{proof}
Let $p_n$ denote the \mbox{$n^{\rm th}$} prime number (\mbox{$p_1=2$}, \mbox{$p_2=3$}, etc.),
and let $c$ stands for any integer greater than $1$.
The function $f$ is strictly increasing and there exists the smallest positive integer $n$ such
that \mbox{$c \leqslant f(n+1)$}. Hence, if positive integers \mbox{$x_1,\ldots,x_i$} satisfy
\mbox{$f(i)<{\rm max}(x_1,\ldots,x_i) \leqslant c$}, then \mbox{$i \leqslant n$} and
\mbox{$2 \leqslant p_1^{\textstyle x_1} \ldots p_i^{\textstyle x_i} \leqslant p_1^{\textstyle c} \ldots p_n^{\textstyle c}$}.
Therefore, if the sentence \mbox{$\Phi(c)$} is true, then the flowchart algorithm checks
all tuples of positive integers needed to confirm the sentence \mbox{$\Phi(c)$}.
\end{proof}
\par
The following {\sl MuPAD} code implements a simplified flowchart's algorithm which checks
the following conjunction
\[
\Bigl(m \geqslant n\Bigr)~\wedge \Bigl({\rm max}(y_1,\ldots,y_n)>c\Bigr)~\wedge
\]
\[
\Bigl(\forall i,k \in \{1,\ldots,n\} ~(x_i+1=x_k \Longrightarrow y_i+1=y_k)\Bigr)~\wedge
\]
\[
\Bigl(\forall i,j,k \in \{1,\ldots,n\} ~(x_i \cdot x_j=x_k \Longrightarrow y_i \cdot y_j=y_k)\Bigr)
\]
instead of four separate conditions.
\begin{quote}
\begin{verbatim}
c:=2:
while TRUE do
a:=2:
repeat
S:=op(ifactor(a)):
n:=(nops(S)-1)/2:
u:=min(S[2*i+1] $i=1..n):
v:=max(S[2*i+1] $i=1..n):
X:=[S[2*i+1] $i=1..n]:
if n=1 then f:=1 end_if:
if n>1 then f:=2^(2^(n-2)) end_if:
if n>5 then f:=(2+2^(2^(n-4)))^(2^(n-4))
end_if:
g:=2^(2^(n-1)):
if n>4 then g:=(2+2^(2^(n-3)))^(2^(n-3))
end_if:
if f<v and v<=c then
print(X):
print(c-1):
b:=2:
repeat
T:=op(ifactor(b)):
m:=(nops(T)-1)/2:
Y:=[T[2*i+1] $i=1..m]:
r:=min(m-n+1,max(Y[i] $i=1..m)-c):
for i from 1 to min(n,m) do
for j from 1 to min(n,m) do
for k from 1 to min(n,m) do
if X[i]+1=X[k] and Y[i]+1<>Y[k] then
r:=0 end_if:








if X[i]*X[j]=X[k] and Y[i]*Y[j]<>Y[k] then
r:=0 end_if:
end_for:
end_for:
end_for:
b:=b+1:
until r>0 end_repeat:
end_if:
a:=a+1:
until c=u and c=v and c<=g end_repeat:
c:=c+1:
end_while:
\end{verbatim}
\end{quote}
\par
We attempt to confirm the sentence $\Phi(256)$.
Since the execution of the flowchart algorithm (or its any variant) proceeds slowly,
we must confirm the sentence $\Phi(256)$ in a different way.
For integers \mbox{$a_1,\ldots,a_n$}, let \mbox{$P(a_1,\ldots,a_n)$} denote
the following system of equations:
\[
\left\{\begin{array}{rcl}
x_{i}+1 &=& x_{k} {\rm ~~(if~} a_{i}+1=a_{k}) \\
x_{i} \cdot x_{j} &=& x_{k} {\rm ~~(if~} a_{i} \cdot a_{j}=a_{k})
\end{array}\right.
\]
\begin{lemma}\label{lem5}
For each positive integer $n$, there exist positive integers \mbox{$a_1,\ldots,a_n$} such that
\mbox{$a_1 \leqslant \ldots \leqslant a_n=\tau(n)$} and the system \mbox{$P(a_1,\ldots,a_n)$}
has only finitely many solutions in positive integers.
Each such numbers \mbox{$a_1,\ldots,a_n$} satisfy \mbox{$a_1<\ldots<a_n$}.
\end{lemma}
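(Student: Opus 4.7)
The plan is to reduce the lemma to one auxiliary fact, namely that the function $\tau$ is strictly increasing, and then handle the strictness claim by an ``identify two variables'' argument. The main work splits into three steps (the case $n=1$ being vacuous, since $\tau(1)=1$).

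First I would show $\tau(n)>\tau(n-1)$ for every $n\geq 2$. The quantity $\tau(n-1)$ is attained, because only finitely many systems live on $n-1$ variables and each finite-solution system has a finite solution set; fix such a system $T$ with a positive-integer solution $(a_1,\ldots,a_{n-1})$ satisfying $a_p=\tau(n-1)$ at some index $p$. Adjoining the single equation $x_p+1=x_n$ (with $x_n$ a fresh variable) yields an allowable system on $n$ variables with the same finite number of solutions, since the new coordinate is uniquely determined by $x_p$, and the extended solution attains $x_n=\tau(n-1)+1$. Hence $\tau(n)\geq\tau(n-1)+1$.

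Next, for existence, the same attainment argument produces a finite-solution system $T'$ on $n$ variables with a positive-integer solution whose maximum coordinate equals $\tau(n)$. Permuting the variable indices is a symmetry of the allowable class, so I may rearrange this solution into a tuple with $a_1\leq\cdots\leq a_n=\tau(n)$. Every equation of $T'$ belongs by construction to $P(a_1,\ldots,a_n)$, hence $T'\subseteq P(a_1,\ldots,a_n)$, and therefore $P(a_1,\ldots,a_n)$ also has only finitely many positive-integer solutions.

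Finally, to prove strictness, I argue by contradiction: suppose some such tuple has $a_i=a_{i+1}$ for some $i$. I form a system $Q$ on $n-1$ variables by substituting $x_{i+1}:=x_i$ throughout $P(a_1,\ldots,a_n)$ and relabelling. Because $a_i=a_{i+1}$ forces $a_i+1\neq a_{i+1}$ and $a_{i+1}+1\neq a_i$, no equation of the form $x_i+1=x_{i+1}$ or $x_{i+1}+1=x_i$ occurs in $P(a_1,\ldots,a_n)$; every equation of $Q$ therefore still has one of the two allowed shapes (multiplicative equations involving both $x_i$ and $x_{i+1}$ collapse to legitimate instances $x_i\cdot x_i=x_k$). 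Each positive-integer solution of $Q$ lifts to a solution of $P(a_1,\ldots,a_n)$ via $x_{i+1}:=x_i$, so $Q$ has finitely many positive-integer solutions, while $(a_1,\ldots,a_i,a_{i+2},\ldots,a_n)$ solves $Q$ with maximum $a_n=\tau(n)$. This yields $\tau(n-1)\geq\tau(n)$, contradicting the first step. The main obstacle, and the subtle point in the whole argument, is the syntactic bookkeeping for $Q$: the hypothesis $a_i=a_{i+1}$ is precisely what prevents a forbidden ``equality'' $x_i=x_{i+1}$ from being needed to remain in the allowed class.
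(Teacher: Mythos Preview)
Your argument is correct. The existence step and the ``collapse a duplicate'' idea are exactly what is needed, and your bookkeeping for the substituted system $Q$ is accurate: since $a_i=a_{i+1}$, neither $x_i+1=x_{i+1}$ nor $x_{i+1}+1=x_i$ lies in $P(a_1,\ldots,a_n)$, so every equation survives the substitution in admissible shape, and the injective lift $x_{i+1}:=x_i$ shows $Q$ inherits finiteness.

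Your route differs from the paper's in organisation, though both rest on the same collapse observation. You first prove the auxiliary fact $\tau(n)>\tau(n-1)$ and then contradict it by collapsing to $n-1$ variables. The paper skips the monotonicity lemma entirely: after removing the duplicate it \emph{re-inserts} a fresh $n$-th coordinate equal to $a_n+1$, obtaining a tuple $(b_1,\ldots,b_n)$ still of length $n$ with $b_n=\tau(n)+1$. One then checks (by the very same lifting argument you wrote out) that $P(b_1,\ldots,b_n)$ still has only finitely many positive-integer solutions, which directly contradicts the definition of $\tau(n)$. The paper's trick is a one-line shortcut that avoids the separate monotonicity step; your version, in return, makes the existence claim and the syntactic verification explicit, which the paper's two-sentence proof leaves entirely to the reader.
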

\begin{proof}
If \mbox{$a_1<\ldots<a_n$} does not hold, then we remove the first duplicate and insert \mbox{$a_n+1$} after $a_n$.
Since \mbox{$a_n+1>a_n=\tau(n)$}, we get a contradiction.
\end{proof}
\par
Let ${\cal F}$ denote the family of all systems \mbox{$P(a_1,a_2,a_3)$}, where integers
\mbox{$a_1,a_2,a_3$} satisfy \mbox{$1<a_1<a_2<a_3$}.
\begin{theorem}\label{the9}
The Conjecture is true for \mbox{$n=3$}.
\end{theorem}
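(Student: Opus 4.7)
Theorem~\ref{the1} applied with $n=3$ already yields $\tau(3)\geqslant f(3)=4$, so it remains to prove the reverse inequality $\tau(3)\leqslant 4$. I would argue by contradiction: assume $\tau(3)>4$ and apply Lemma~\ref{lem5} to obtain positive integers $a_1<a_2<a_3=\tau(3)>4$ such that $P(a_1,a_2,a_3)$ has only finitely many solutions in positive integers. The aim is then to exhibit in every admissible configuration a variable ranging freely over $\N\setminus\{0\}$, producing the required contradiction.

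The strict inequalities $a_1<a_2<a_3$ severely restrict which equations can appear in $P(a_1,a_2,a_3)$. Any additive equation must be $x_1+1=x_2$ or $x_2+1=x_3$, since the intermediate value $a_2$ precludes $a_1+1=a_3$. When $a_1\geqslant 2$ the possible multiplicative equations reduce to $x_1x_1=x_2$ (requiring $a_2=a_1^2$) together with at most one of $x_1x_1=x_3$, $x_1x_2=x_3$, $x_2x_2=x_3$; indeed, any $x_ix_j=x_1$ is excluded by $a_ia_j\geqslant a_1^2>a_1$, the case $x_ix_j=x_2$ with $(i,j)\neq(1,1)$ forces some $a_\ell\leqslant 1$, and any two of $a_1^2=a_3$, $a_1a_2=a_3$, $a_2^2=a_3$ force $a_1=a_2$.

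The argument then splits on $a_1\geqslant 2$ versus $a_1=1$. In the case $a_1\geqslant 2$ I would enumerate the admissible combinations of the two additive shapes with the handful of multiplicative shapes, noting that most combinations are forbidden outright because $(a_1,a_2,a_3)$ must simultaneously satisfy the chosen equations---for example $x_1+1=x_2$ together with $x_1x_1=x_2$ forces $a_1^2-a_1-1=0$, which has no positive integer solution. In every surviving shape eliminating $x_2,x_3$ writes them as polynomials in $x_1$, leaving $x_1$ free in $\N\setminus\{0\}$; the sole exception is $\{x_1+1=x_2,\ x_2+1=x_3,\ x_1x_1=x_3\}$, whose elimination yields $x_1^2-x_1-2=0$ and hence $(a_1,a_2,a_3)=(2,3,4)$, precisely the configuration excluded by the hypothesis $a_3>4$. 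In the case $a_1=1$ the equation $x_1x_1=x_1$ belongs to $P$ and fixes $x_1=1$; if $a_2=2$ then $x_1+1=x_2$ forces $x_2=2$ while the hypothesis $a_3>4$ kills the only remaining equations ($x_2+1=x_3$ needing $a_3=3$, $x_2x_2=x_3$ needing $a_3=4$) that could constrain $x_3$, so $x_3$ is free; if $a_2\geqslant 3$ then no equation $x_i+1=x_2$ or $x_ix_j=x_2$ can hold nontrivially, leaving $x_2$ itself free.

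The main obstacle is combinatorial rather than conceptual: one must verify exhaustively that every admissible shape of $P(a_1,a_2,a_3)$ with $a_3>4$ has been examined, and in each check that the advertised free variable genuinely takes every value in $\N\setminus\{0\}$. The delicate point is confirming that the near-miss $\{x_1+1=x_2,\ x_2+1=x_3,\ x_1x_1=x_3\}$ is the unique shape admitting a finite (in fact unique) solution with $a_1\geqslant 2$: this singles out $(2,3,4)$, and it is precisely the strict inequality $a_3>4$ that forbids it, which is why $4$ is the tight value of $\tau(3)$.
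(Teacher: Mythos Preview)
Your proposal is correct and follows essentially the same route as the paper: both reduce via Lemma~\ref{lem5} to analyzing $P(a_1,a_2,a_3)$ with $a_1<a_2<a_3$, use the ordering to cut the admissible equations down to a handful of shapes, and then check case by case that every system except $\{x_1+1=x_2,\ x_2+1=x_3,\ x_1x_1=x_3\}$ (solved uniquely by $(2,3,4)$) has infinitely many positive integer solutions. The only cosmetic differences are that the paper records the $a_1\geqslant 2$ enumeration in an explicit $6\times 4$ table rather than verbally, and dispatches the case $a_1=1$ in a single line (``then $a_2=2$ and $a_3\in\{3,4\}$'') where you argue it out by splitting on $a_2=2$ versus $a_2\geqslant 3$.
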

\begin{proof}
By Lemma~\ref{lem5}, there exist positive integers $a_1$, $a_2$, $a_3$ such that \mbox{$a_1<a_2<a_3=\tau(3)$}
and the system \mbox{$P(a_1,a_2,a_3)$} has only finitely many solutions in positive integers.
If \mbox{$a_1=1$}, then \mbox{$a_2=2$} and \mbox{$a_3 \in \{3,4\}$}. Let \mbox{$a_1>1$}.
Since \mbox{$a_1<a_2<a_3$}, we get \mbox{$a_1 \cdot a_1<a_1 \cdot a_2<a_2 \cdot a_2$}. Hence,
\[
{\rm card}\Bigl(P(a_1,a_2,a_3) \cap \Bigl\{x_1 \cdot x_1=x_3,~x_1 \cdot x_2=x_3,~x_2 \cdot x_2=x_3\Bigr\}\Bigr) \leqslant 1
\]
Each integer $a_1$ satisfies \mbox{$a_1+1 \neq a_1 \cdot a_1$}. Hence,
\[
{\rm card}\Bigl(P(a_1,a_2,a_3) \cap \Bigl\{x_1+1=x_2,~x_1 \cdot x_1=x_2\Bigr\}\Bigr) \leqslant 1
\]
Since \mbox{$a_1<a_2<a_3$}, the equation \mbox{$x_1+1=x_3$} does not belong to \mbox{$P(a_1,a_2,a_3)$}.
By these observations, the following table shows all solutions in positive integers to any system that
belongs to ${\cal F}$.\\
\begin{table}[ht!]
\renewcommand{\arraystretch}{.95}
\setlength{\tabcolsep}{2.1pt}
\footnotesize
\centering
\begin{tabular}{|r|*{4}{c|}}
\cline{2-5}
\multicolumn{1}{c|}{$\phantom{\Bigg\vert}$} & $\varnothing$ & $\left\{x_1{\cdot}x_1{=}x_3\right\}$ & $\left\{x_1{\cdot}x_2{=}x_3\right\}$ & $\left\{x_2{\cdot}x_2{=}x_3\right\}$\\
 \hline
 & any triple & any triple & any triple & any triple \\
$\varnothing\, \cup$  & $\left(s,t,u\right)$ & $\left(s,t,s^2\right)$ & $\left(s,t,s{\cdot}t\right)$ & $\left(s,t,t^2\right)$\\
 & solves this	& solves this	& solves this	& solves this \\
 & system	& system	& system	& system \\
 \hline
 & any triple & any triple & any triple & any triple \\
$\left\{x_1{+}1{=}x_2\right\}\, \cup$  & $\left(s,s{+}1,u\right)$ & $\left(s,s{+}1,s^2\right)$ & $\left(s,s{+}1,s{\cdot}(s{+}1)\right)$ & $\left(s,s{+}1,(s{+}1)^2\right)$\\ 
 & solves this	& solves this	& solves this	& solves this \\
 & system	& system	& system	& system \\
 \hline
 & any triple &  	   & any triple & any triple \\
$\left\{x_1{\cdot}x_1{=}x_2\right\}\, \cup$  & $\left(s,s^2,u\right)$ & $\not\in {\cal F}$ & $\left(s,s^2,s^3\right)$ & $\left(s,s^2,s^4\right)$\\ 
 & solves this	&		& solves this	& solves this \\
 & system	&		& system	& system \\
 \hline
 & any triple & any triple &	& \\
$\left\{x_2{+}1{=}x_3\right\}\, \cup$  & $\left(s,t,t{+}1\right)$ & $\left(s,s^2{-}1,s^2\right)$ & $\not\in {\cal F}$ & $\not\in {\cal F}$\\ 
 & solves this	& solves this	&	& \\
 & system	& system	&	& \\
 \hline
 & any triple & only the triple &	& \\
$\{x_1{+}1{=}x_2,\phantom{\cup}$  & $\left(s,s{+}1,s{+}2\right)$ & $(2,3,4)$ & $\not\in {\cal F}$ & $\not\in {\cal F}$\\
$\phantom{\{}x_2{+}1{=}x_3\}\,\cup$ & solves this & solves this & & \\
 & system	& system	&	& \\
 \hline
 & any triple	&	&	& \\
$\{x_1{\cdot}x_1{=}x_2,\phantom{\cup}$ & $\left(s,s^2,s^2{+}1\right)$ & $\not\in {\cal F}$ & $\not\in {\cal F}$ & $\not\in {\cal F}$\\
$\phantom{\{}x_2{+}1{=}x_3\}\,\cup$ & solves this &	&	& \\
& system  	&	&	& \\
 \hline
\end{tabular}
\end{table}
\newpage
The table indicates that the system
\[
\Bigl\{x_1+1=x_2,~x_2+1=x_3,~x_1 \cdot x_1=x_3\Bigr\}=
\]
\[
\Bigl\{x_1+1=x_2,~x_2+1=x_3\Bigr\} \cup \Bigl\{x_1 \cdot x_1=x_3\Bigr\}
\]
has a unique solution in positive integers, namely \mbox{$(2,3,4)$}. The other presented systems
do not belong to ${\cal F}$ or have infinitely many solutions in positive integers.
\end{proof}
\begin{corollary}\label{cor3}
Since the Conjecture is true for \mbox{$n \in \{1,2,3\}$}, the sentence $\Phi(16)$ is true.
\end{corollary}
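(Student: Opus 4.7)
The plan is to verify $\Phi(16)$ by splitting on the number of variables $n$ of the system under consideration, reducing every non-trivial case to the hypothesis that the Conjecture holds for $n\in\{1,2,3\}$. The key numerical observation I will exploit is that $f(4)=2^{2^{4-2}}=16$, together with the fact (used in the proof of Theorem~\ref{the8}) that $f$ is strictly increasing on $\N\setminus\{0\}$, so $f(n)\geqslant 16$ for every $n\geqslant 4$.

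I would fix an arbitrary system $T\subseteq\{x_i+1=x_k,\ x_i\cdot x_j=x_k:\ i,j,k\in\{1,\ldots,n\}\}$ whose every solution in positive integers has all coordinates at most $16$; by the definition of $\Phi(16)$ it suffices to show that every such solution has all coordinates at most $f(n)$. For $n\geqslant 4$ this is immediate from $f(n)\geqslant 16$ and requires nothing further about $T$. For $n\in\{1,2,3\}$ the hypothesis forces $T$ to have at most $16^n$ solutions in positive integers, in particular only finitely many, so the Conjecture restricted to $n$ variables applies and yields the required bound $f(n)$ on every coordinate. That restricted Conjecture is available to us: for $n=1$ and $n=2$ it is recorded in the paragraph preceding Corollary~\ref{cor3}, and for $n=3$ it is exactly Theorem~\ref{the9}.

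Since the two ranges $n\leqslant 3$ and $n\geqslant 4$ exhaust all possibilities, $\Phi(16)$ follows. I do not anticipate any genuine obstacle here; all of the substantive mathematical work is concentrated in Theorem~\ref{the9}, and Corollary~\ref{cor3} is then a routine repackaging of the Conjecture for $n\leqslant 3$ into the quantitative form $\Phi(f(4))=\Phi(16)$, exactly parallel to how the truth of the Conjecture for $n\in\{1,2\}$ was packaged as $\Phi(f(3))=\Phi(4)$ earlier in the paper.
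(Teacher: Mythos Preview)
Your argument is correct and matches the paper's intended reasoning: the paper states Corollary~\ref{cor3} without an explicit proof, treating it as immediate from Theorem~\ref{the9} in exactly the way you spell out, parallel to how $\Phi(4)$ was derived earlier from the Conjecture for $n\in\{1,2\}$ using $f(3)=4$. Your observation that $f(4)=16$ handles $n\geqslant 4$ automatically, while the verified Conjecture for $n\leqslant 3$ handles the remaining cases, is precisely the point.
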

\begin{theorem}\label{the10}
The sentence $\Phi(256)$ is true.
\end{theorem}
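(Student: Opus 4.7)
My plan is to reduce $\Phi(256)$ to the Conjecture for $n \in \{1,2,3,4\}$ and then to establish the missing case $n=4$. For $n \geq 5$ the inequality $f(n) \geq f(5) = 256$ makes the implication $\max(x_1,\ldots,x_n) \leq 256 \Longrightarrow \max(x_1,\ldots,x_n) \leq f(n)$ automatic, and for $n \in \{1,2,3\}$ the Conjecture is already known from Corollary~\ref{cor3} (via Theorem~\ref{the9}). Hence proving $\Phi(256)$ amounts to showing that $\tau(4) \leq f(4) = 16$.

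To establish this, I apply Lemma~\ref{lem5}: there exist strictly increasing positive integers $a_1 < a_2 < a_3 < a_4 = \tau(4)$ for which $P(a_1,a_2,a_3,a_4)$ has only finitely many positive integer solutions. I then enumerate the possible shapes of $P$, mirroring the table-driven argument of Theorem~\ref{the9}. Monotonicity restricts the admissible successor equations to a subset of $\{x_1+1=x_2,\; x_2+1=x_3,\; x_3+1=x_4\}$, and each admissible product equation $x_i \cdot x_j = x_k$ (with $i \leq j$) must satisfy $a_i \cdot a_j = a_k \leq a_4$; when $\min(a_i,a_j) \geq 2$ it moreover forces $k > j$. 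The case $a_1 = 1$ is especially constrained because $x_1 \cdot x_1 = x_1$ lies automatically in $P$, pinning $x_1 = 1$ and collapsing the analysis to at most three active variables. For each candidate shape of $P$ I would decide finiteness by exhibiting a one-parameter family of solutions when it fails, and when finite compute the maximum of $x_4$. The extremal examples $(1,2,4,16)$ from Theorem~\ref{the1} and $(2,3,4,16)$ arising from $\{x_1+1=x_2,\; x_2+1=x_3,\; x_1\cdot x_1=x_3,\; x_3\cdot x_3=x_4\}$ (where $x_1^2 = x_1+2$ forces $x_1 = 2$) confirm that $16$ is sharp.

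The principal obstacle is the size of the enumeration: in Theorem~\ref{the9} the $n=3$ table fit on a single page, but for $n=4$ the number of admissible shapes grows substantially and ruling out infinite families of solutions becomes more delicate, since a generic four-variable system tends to admit parametric solutions. Individually, each case yields to elementary monotonicity, divisibility, or quadratic reasoning, but the bookkeeping is heavy. In practice I would perform a computer-assisted search over the strictly increasing tuples $(a_1,a_2,a_3,a_4)$ with $16 < a_4 \leq 256$ and verify in every instance that $P(a_1,a_2,a_3,a_4)$ admits an infinite family of positive integer solutions, thereby excluding such tuples from contributing to $\tau(4)$ and completing the proof of $\Phi(256)$.
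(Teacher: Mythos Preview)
Your final computational plan---enumerate strictly increasing quadruples $(a_1,a_2,a_3,a_4)$ with $16<a_4\le 256$ and verify that each $P(a_1,a_2,a_3,a_4)$ admits infinitely many positive integer solutions---is exactly what the paper does. The paper streamlines the search by first reducing to $63$ inclusion-maximal quadruples (those whose $P$-system is not properly contained in another $P$-system arising in the range) via a {\sl MuPAD} script, and then tabulating for each of the $63$ an explicit one-parameter family of solutions; the conclusion follows from Lemma~\ref{lem5}.

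Where your write-up goes astray is the assertion that ``proving $\Phi(256)$ amounts to showing that $\tau(4)\le 16$'' and that the bounded search ``exclud[es] such tuples from contributing to $\tau(4)$''. The implication runs only one way: $\tau(4)\le 16$ would certainly yield $\Phi(256)$, but a search restricted to $a_4\le 256$ cannot bound $\tau(4)$, since it says nothing about quadruples with $a_4>256$. The paper is explicit about this distinction: immediately after the proof it remarks that the Conjecture for $n\in\{1,2,3,4\}$ would imply Theorem~\ref{the10} but that ``formally, the Conjecture remains unproven for $n=4$'', and it then explains that a full case analysis actually establishing $\tau(4)\le 16$ (the enumeration you sketch in your second paragraph) would require handling on the order of $48+432=480$ parametric systems rather than the $63$ needed for $\Phi(256)$. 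So your computation does prove the stated theorem, but not the stronger claim about $\tau(4)$ that you attach to it.
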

\begin{proof}
By Corollary~\ref{cor3}, it suffices to consider quadruples of positive integers.
The next {\sl MuPAD} code returns $63$ quadruples \mbox{$(a_i,b_i,c_i,d_i)$} of positive integers,
where \mbox{$a_i<b_i<c_i<d_i \leqslant 256$} and \mbox{${\rm max}(a_i,b_i,c_i,d_i)=d_i>16$}. These quadruples
have the following property: if positive integers \mbox{$a,b,c,d$} satisfy \mbox{$a<b<c<d \leqslant 256$}
and \mbox{${\rm max}(a,b,c,d)=d>16$}, then there exists \mbox{$i \in \{1,\ldots,63\}$}
such that \mbox{$P(a,b,c,d) \subseteq P(a_i,b_i,c_i,d_i)$}.
\begin{quote}
\begin{verbatim}
TEXTWIDTH:=60:
S:={}:
G:=[]:
T:={}:
H:=[]:









for a from 1 to 256 do
for b from 1 to 256 do
for c from 1 to 256 do
Y:=[1,a+1,a*a,a*b]:
for l from 1 to 4 do
X:=sort([a,b,c,Y[l]]):
u:=nops({a,b,c,Y[l]}):
v:=max(a,b,c,Y[l]):
if u=4 and 16<v and v<257 then
M:={}:
for i from 1 to 4 do
for j from i to 4 do
for k from 1 to 4 do
if X[i]+1=X[k] then
M:=M union {[i,k]} end_if:
if X[i]*X[j]=X[k] then
M:=M union {[i,j,k]} end_if:
end_for:
end_for:
end_for:
d:=nops(S union {M})-nops(S):
if d=1 then
S:=S union {M}:
G:=append(G,M):
T:=T union {X}:
H:=append(H,X):
end_if:
end_if:
end_for:
end_for:
end_for:
end_for:
for w from 1 to nops(G) do
for z from 1 to nops(G) do
p:=nops(G[w] minus G[z]):
q:=nops(G[z] minus G[w]):
if p=0 and 0<q then T:=T minus {H[w]}
end_if:
end_for:
end_for:
print(T):
\end{verbatim}
\end{quote}
\noindent
The next table displays the quadruples $[a_1,b_1,c_1,d_1],\ldots,[a_{63},b_{63},c_{63},d_{63}]$
and shows that for each \mbox{$i \in \{1,\ldots,63\}$} the system
\mbox{$P(a_i,b_i,c_i,d_i)$} has infinitely many solutions in positive
integers, which completes the proof by Lemma~\ref{lem5}.
\begin{table}[ht!]
\renewcommand{\arraystretch}{.95}
\setlength{\tabcolsep}{1.7pt}
\centering
\footnotesize
\begin{tabular}{|*{4}{c|}}
\hline
$\left(1,\,2,\,3,\,t\right)$ & $\left(1,\,2,\,4,\,t\right)$ & $\left(2,\,3,\,4,\,t\right)$\\
$[1,\ 2,\ 3,\ 17]$ & $[1,\ 2,\ 4,\ 17]$ & $[2,\ 3,\ 4,\ 17]$\\\hline
$\left(t,\,t{+}1,\,t\left(t{+}1\right),\,t(t{+}1)^2\right)$ & $\left(1,\,2,\,t,\,2t\right)$ & $\left(1,\,t,\,t{+}1,\,t(t{+}1)\right)$\\
$[2,\ 3,\ 6,\ 18]$ & $[1,\ 2,\ 9,\ 18]$ & $[1,\ 4,\ 5,\ 20]$\\\hline
$\left(t,\,t^2,\,t^2{+}1,\,t^2\left(t^2{+}1\right)\right)$ & $\left(t,\,t{+}1,\,(t{+}1)^2,\,t(t{+}1)^2\right)$ & $\left(t,\,t{+}1,\,t{+}2,\,(t{+}1)(t{+}2)\right)$\\
$[2,\ 4,\ 5,\ 20]$ & $[2,\ 3,\ 9,\ 18]$ & $[3,\ 4,\ 5,\ 20]$\\\hline
$\left(1,\,2,\,t,\,t^2\right)$ & $\left(1,\,t,\,t{+}1,\,(t{+}1)^2\right)$ & $\left(t,\,t{+}1,\,t{+}2,\,t(t{+}1)\right)$\\
$[1,\ 2,\ 5,\ 25]$ & $[1,\ 4,\ 5,\ 25]$ & $[4,\ 5,\ 6,\ 20]$\\\hline
$\left(1,\,2,\,t,\,t{+}1\right)$ & $\left(t,\,t^2,\,t^2{+}1,\,\left(t^2{+}1\right)^2\right)$ & $\left(1,\,t,\,t{+}1,\,t^2\right)$\\
$[1,\ 2,\ 16,\ 17]$ & $[2,\ 4,\ 5,\ 25]$ & $[1,\ 5,\ 6,\ 25]$\\\hline
$\left(t,\,t{+}1,\,t{+}2,\,(t{+}2)^2\right)$ & $\left(1,\,t,\,t^2,\,t^2{+}1\right)$ & $\left(t,\,t^2,\,t^4,\,t^4{+}1\right)$\\
$[3,\ 4,\ 5,\ 25]$ & $[1,\ 4,\ 16,\ 17]$ & $[2,\ 4,\ 16,\ 17]$\\\hline
$\left(t,\,t{+}1,\,t{+}2,\,t(t{+}2)\right)$ & $\left(1,\,t,\,t^2,\,t^3\right)$ & $\left(t,\,t{+}1,\,(t{+}1)^2,\,(t{+}1)^2{+}1\right)$\\
$[4,\ 5,\ 6,\ 24]$ & $[1,\ 3,\ 9,\ 27]$ & $[3,\ 4,\ 16,\ 17]$\\\hline
$\left(t,\,t{+}1,\,t{+}2,\,(t{+}1)^2\right)$ & $\left(t,\,t{+}1,\,(t{+}1)^2,\,(t{+}1)^3\right)$ & $\left(t,\,t{+}1,\,t^2,\,t^2{+}1\right)$\\
$[4,\ 5,\ 6,\ 25]$ & $[2,\ 3,\ 9,\ 27]$ & $[4,\ 5,\ 16,\ 17]$\\\hline
$\left(t,\,t{+}1,\,t^2,\,t^3\right)$ & $\left(t,\,t{+}1,\,t{+}2,\,t^2\right)$ & $\left(t,\,t^2{-}1,\,t^2,\,t\left(t^2{-}1\right)\right)$\\
$[3,\ 4,\ 9,\ 27]$ & $[5,\ 6,\ 7,\ 25]$ & $[3,\ 8,\ 9,\ 24]$\\\hline
$\left(t,\,t{+}1,\,t^2,\,t(t{+}1)\right)$ & $\left(t,\,t^2,\,t^3,\,t^5\right)$ & $\left(t,\,t{+}1,\,t(t{+}1),\,t^2(t{+}1)^2\right)$\\
$[4,\ 5,\ 16,\ 20]$ & $[2,\ 4,\ 8,\ 32]$ & $[2,\ 3,\ 6,\ 36]$\\\hline
$\left(t,\,t^2{-}1,\,t^2,\,t^3\right)$ & $\left(t,\,t{+}1,\,t(t{+}1){-}1,\,t(t{+}1)\right)$ & $\left(1,\,t,\,t{+}1,\,t{+}2\right)$\\
$[3,\ 8,\ 9,\ 27]$ & $[4,\ 5,\ 19,\ 20]$ & $[1,\ 15,\ 16,\ 17]$\\\hline
$\left(t,\,t^2,\,t^2{+}1,\,t^3\right)$ & $\left(t,\,t{+}1,\,t^2,\,(t{+}1)^2\right)$ & $\left(t,\,t{+}1,\,t(t{+}1),\,t(t{+}1){+}1\right)$\\
$[3,\ 9,\ 10,\ 27]$ & $[4,\ 5,\ 16,\ 25]$ & $[4,\ 5,\ 20,\ 21]$\\\hline
$\left(t,\,t{+}1,\,t^2,\,(t{+}1)t^2\right)$ & $\left(t,\,t^2,\,t^2{+}1,\,t\left(t^2{+}1\right)\right)$ & $\left(t,\,t^2{-}1,\,t^2,\,t^2{+}1\right)$\\
$[3,\ 4,\ 9,\ 36]$ & $[3,\ 9,\ 10,\ 30]$ & $[4,\ 15,\ 16,\ 17]$\\\hline
$\left(t,\,t^2,\,t^4,\,t^5\right)$ & $\left(t,\,t{+}1,\,t(t{+}1),\,(t{+}1)^2\right)$ & $\left(1,\,t,\,t^2{-}1,\,t^2\right)$\\
$[2,\ 4,\ 16,\ 32]$ & $[4,\ 5,\ 20,\ 25]$ & $[1,\ 5,\ 24,\ 25]$\\\hline
$\left(t,\,t{+}1,\,t(t{+}1),\,t^2(t{+}1)\right)$ & $\left(t,\,t^2,\,t^2{+}1,\,t^2{+}2\right)$ & $\left(t,\,t{+}1,\,(t{+}1)^2{-}1,\,(t{+}1)^2\right)$\\
$[3,\ 4,\ 12,\ 36]$ & $[4,\ 16,\ 17,\ 18]$ & $[4,\ 5,\ 24,\ 25]$\\\hline
$\left(t,\,t{+}1,\,t^2{-}1,\,t^2\right)$ & $\left(t,\,t{+}1,\,t{+}2,\,t{+}3\right)$ & $\left(t,\,t^2,\,t^3{-}1,\,t^3\right)$\\
$[5,\ 6,\ 24,\ 25]$ & $[14,\ 15,\ 16,\ 17]$ & $[3,\ 9,\ 26,\ 27]$\\\hline
$\left(t,\,t^2,\,t^3,\,t^3{+}1\right)$ & $\left(t,\,t^2{-}2,\,t^2{-}1,\,t^2\right)$ & $\left(t,\,t^2,\,t^3,\,t^6\right)$\\
$[3,\ 9,\ 27,\ 28]$ & $[5,\ 23,\ 24,\ 25]$ & $[2,\ 4,\ 8,\ 64]$\\\hline
$\left(t,\,t^2{-}1,\,t^2,\,\left(t^2{-}1\right)^2\right)$ & $\left(t,\,t^2,\,t^4,\,t^6\right)$ & $\left(t,\,t^2{-}1,\,t^2,\,\left(t^2{-}1\right)t^2\right)$\\
$[3,\ 8,\ 9,\ 64]$ & $[2,\ 4,\ 16,\ 64]$ & $[3,\ 8,\ 9,\ 72]$\\\hline
$\left(t^2,\,t^3,\,t^4,\,t^6\right)$ & $\left(1,\,t,\,t^2,\,t^4\right)$ & $\left(t,\,t{+}1,\,(t{+}1)^2,\,(t{+}1)^4\right)$\\
$[4,\ 8,\ 16,\ 64]$ & $[1,\ 3,\ 9,\ 81]$ & $[2,\ 3,\ 9,\ 81]$\\\hline
$\left(t,\,t{+}1,\,t^2,\,t^4\right)$ & $\left(t,\,t^2{-}1,\,t^2,\,t^4\right)$ & $\left(t,\,t^2,\,t^2{+}1,\,t^4\right)$\\
$[3,\ 4,\ 9,\ 81]$ & $[3,\ 8,\ 9,\ 81]$ & $[3,\ 9,\ 10,\ 81]$\\\hline
$\left(t,\,t^2,\,t^3,\,t^4\right)$ & $\left(t,\,t^2,\,t^4{-}1,\,t^4\right)$ & $\left(t,\,t^2,\,t^4,\,t^8\right)$ \\
$[3,\ 9,\ 27,\ 81]$ & $[3,\ 9,\ 80,\ 81]$ & $[2,\ 4,\ 16,\ 256]$ \\\hline
\end{tabular}
\end{table}
\end{proof}
\newpage
Of course, the Conjecture restricted to integers \mbox{$n \in \{1,2,3,4\}$}
is intuitively obvious and implies Theorem~\ref{the10}. Formally,
the Conjecture remains unproven for \mbox{$n=4$}.
We explain why a hypothetical \mbox{brute-force} proof of the Conjecture for \mbox{$n=4$}
is much longer than the proof of Theorem~\ref{the10}. By Lemma~\ref{lem5},
it suffices to consider only the systems \mbox{$P(a,b,c,d)$}, where positive integers
\mbox{$a,b,c,d$} satisfy \mbox{$a<b<c<d$}.
\vskip 0.1truecm
\noindent
{\tt Case 1:} \mbox{$a=1$}. Obviously,
\vskip 0.1truecm
\centerline{${\rm card}\Bigl(\Bigl\{x_1+1=x_2\Bigr\} \cap P(a,b,c,d)\Bigr) \leqslant 1$}
\vskip 0.1truecm
\noindent
and
\[
{\rm card}\Bigl(\Bigl\{x_3+1=x_4\Bigr\} \cap P(a,b,c,d)\Bigr) \leqslant 1
\]
Since \mbox{$b+1<b \cdot b$}, we get
\[
{\rm card}\Bigl(\Bigl\{x_2+1=x_3,~x_2 \cdot x_2=x_3\Bigr\} \cap P(a,b,c,d)\Bigr) \leqslant 1
\]
Since \mbox{$b \cdot b<b \cdot c<c \cdot c$}, we get
\[
{\rm card}\Bigl(\Bigl\{x_2 \cdot x_2=x_4,~x_2 \cdot x_3=x_4,~x_3 \cdot x_3=x_4\Bigr\} \cap P(a,b,c,d)\Bigr) \leqslant 1
\]
The above inequalities allow one to determine
$(1+1) \cdot (1+1) \cdot (2+1) \cdot (3+1)=48$
systems which need to be solved.
\vskip 0.1truecm
\noindent
{\tt Case 2:} \mbox{$a>1$}. Obviously,
\vskip 0.1truecm
\centerline{${\rm card}\Bigl(\Bigl\{x_2+1=x_3\Bigr\} \cap P(a,b,c,d)\Bigr) \leqslant 1$}
\vskip 0.1truecm
\noindent
Since \mbox{$a+1<a \cdot a$}, we get
\[
{\rm card}\Bigl(\Bigl\{x_1+1=x_2,~x_1 \cdot x_1=x_2\Bigr\} \cap P(a,b,c,d)\Bigr) \leqslant 1
\]
Since \mbox{$c+1<a \cdot c$}, we get
\[
{\rm card}\Bigl(\Bigl\{x_3+1=x_4,~x_1 \cdot x_3=x_4\Bigr\} \cap P(a,b,c,d)\Bigr) \leqslant 1
\]
Since \mbox{$a \cdot a<a \cdot b<b \cdot b$}, we get
\[
{\rm card}\Bigl(\Bigl\{x_1 \cdot x_1=x_3,~x_1 \cdot x_2=x_3,~x_2 \cdot x_2=x_3\Bigr\} \cap P(a,b,c,d)\Bigr) \leqslant 1
\]
Since \mbox{$a \cdot a<a \cdot b<b \cdot b<b \cdot c<c \cdot c$}, we get
\[
{\rm card}\Bigl(\Bigl\{x_1 \cdot x_1=x_4,~x_1 \cdot x_2=x_4,~x_2 \cdot x_2=x_4,
\]
\[
x_2 \cdot x_3=x_4,~x_3 \cdot x_3=x_4\Bigr\} \cap P(a,b,c,d)\Bigr) \leqslant 1
\]
The above inequalities allow one to determine \mbox{$(1+1)~\cdot$}
\mbox{$(2+1) \cdot (2+1) \cdot (3+1) \cdot (5+1)=432$} systems which need to be solved.
\vskip 0.1truecm
\par
{\sl MuPAD} is a computer algebra system whose syntax is modelled on {\sl Pascal}.
The commercial version of {\sl MuPAD} is no longer available as a \mbox{stand-alone}
product, but only as the {\sl Symbolic Math Toolbox} of {\sl MATLAB}. Fortunately,
the presented codes can be executed by {\sl MuPAD Light},
which was offered for free for research and education until autumn 2005.


\begin{thebibliography}{11}

\bibitem{DMR}
\mbox{M. Davis}, \mbox{Yu. Matiyasevich}, \mbox{J. Robinson},
\newblock
{\em Hilbert's tenth problem. Diophantine equations: positive aspects of a negative solution,}
in: Mathematical developments arising from Hilbert problems (ed.~F.~E.~Browder),
\newblock
Proc. Sympos. Pure Math., vol. 28, Part 2, Amer. Math. Soc., 1976, \mbox{323--378};
\newblock
reprinted in: The collected works of Julia Robinson (ed.~S.~Feferman), Amer. Math. Soc., 1996, \mbox{269--324}.

\bibitem{Matiyasevich1}
\mbox{Yu. Matiyasevich},
\newblock
{\em Hilbert's tenth problem,}
\newblock
MIT Press, Cambridge, MA, 1993.

\bibitem{Matiyasevich2}
\mbox{Yu. Matiyasevich},
\newblock
{\em Hilbert's tenth problem: what was done and what is to be done.}
\newblock
Hilbert's tenth problem: relations with arithmetic and algebraic geometry (Ghent, 1999), \mbox{1--47},
\newblock
Contemp. Math. 270, Amer. Math. Soc., Providence, RI, 2000.

\bibitem{Matiyasevich3}
\mbox{Yu. Matiyasevich},
\newblock
{\em Towards \mbox{finite-fold} Diophantine representations,}
\newblock
J.~Math. Sci. (N. Y.) \mbox{vol. 171}, \mbox{no. 6}, 2010, \mbox{745--752},
\newblock
\url{http://dx.doi.org/10.1007%2Fs10958-010-0179-4}.

\bibitem{Robinson}
\mbox{J. Robinson},
\newblock
{\em Definability and decision problems in arithmetic,}
\newblock
J.~Symbolic Logic 14 (1949), \mbox{98--114};
\newblock
reprinted in: The collected works of Julia Robinson (ed.~S.~Feferman),
Amer. Math. Soc., 1996, \mbox{7--23}.

\bibitem{Skolem}
\mbox{Th. Skolem},
\newblock
{\em Diophantische Gleichungen},
\newblock
Julius Springer, Berlin, 1938.

\bibitem{Smart}
\mbox{N. P. Smart},
\newblock {\em The algorithmic resolution of Diophantine equations,}
\newblock Cambridge University Press, Cambridge, 1998,
\newblock
\url{http://dx.doi.org/10.1017/CBO9781107359994}.

\bibitem{Stoll}
\mbox{M. Stoll},
\newblock {\em How to solve a Diophantine equation,}
in: An invitation to mathematics: From competitions to research (eds. M.~Lackmann and D.~Schleicher),
\newblock
Springer, Berlin-Heidelberg-New York, 2011, \mbox{9--19},
\newblock
\url{http://dx.doi.org/10.1007/978-3-642-19533-4_2}.

\bibitem{Tyszka1}
\mbox{A. Tyszka},
\newblock {\em All functions \mbox{$g \colon \N \to \N$} which have a \mbox{single-fold} Diophantine
representation are dominated by a \mbox{limit-computable} function \mbox{$f \colon \N \setminus \{0\} \to \N$}
which is implemented in {\sl MuPAD} and whose computability is an open problem,}
in: Computation, cryptography, and network security (eds. N.~Daras and M.~Th.~Rassias),
Springer, Berlin-Heidelberg-New York, 2015, in print.

\bibitem{Tyszka2}
\mbox{A. Tyszka},
\newblock
{\em Conjecturally computable functions which unconditionally
do not have any \mbox{finite-fold} Diophantine representation,}
\newblock
Inform. Process. Lett. 113 (2013), \mbox{no. 19--21}, \mbox{719--722},
\newblock
\url{http://dx.doi.org/10.1016/j.ipl.2013.07.004}.

\balance

\bibitem{Tyszka3}
\mbox{A. Tyszka},
\newblock
{\em Does there exist an algorithm which to each Diophantine equation
assigns an integer which is greater than the modulus of integer solutions,
if these solutions form a finite set?}
\newblock
Fund. Inform. 125 (2013), \mbox{no. 1}, \mbox{95--99},
\newblock
\url{http://dx.doi.org/10.3233/FI-2013-854}.

\end{thebibliography}
\end{document}